\newcommand{\R}{\mathbb{R}}
\theoremstyle{plain}
\newtheorem{thm}{Theorem}[section]
\newtheorem{lemma}[thm]{Lemma}
\newtheorem{prop}[thm]{Proposition}
\newtheorem{defn}[thm]{Definition}
\newtheorem{example}[thm]{Example}
\newtheorem{rmk-defn}[thm]{Remark}
\newtheorem{rmk-thm}[thm]{Remark}
\newtheorem{rmk-lem}[thm]{Remark}
\newtheorem{rmk-prop}[thm]{Remark}
\title[Digital tools for non-diffeomorphic shapes]{Digital Euler Characteristic Transform}
\author{Henry Kirveslahti}
\address{Laboratory for Topology and Neuroscience, EPFL, Lausanne, Switzerland;  IMADA \& Center for Quantum Mathematics, University of Southern Denmark, Odense, Denmark}
\email{hklahti@imada.sdu.dk}
\author{Xiaohan Wang}
\address{Laboratory for Topology and Neuroscience, EPFL, Lausanne, Switzerland; School of Physical and Mathematical Sciences, Nanyang Technological University, Singapore}
\email{xiaohan005@e.ntu.edu.sg}
\subjclass[2020]{Primary: 62R40, 68U05; Secondary: 55N31}
\keywords{Topological data analysis, Statistical Shape Analysis, Topological Radon Transform}
\begin{document}

\begin{abstract}
The Euler Characteristic Transform (ECT) of Turner et al. provides a way to statistically analyze non-diffeomorphic shapes without relying on landmarks. In applications, this transform is typically approximated by a discrete set of directions and heights, which results in potential loss of information as well as problems in inverting the transform. In this work we present a fully digital algorithm for computing the ECT exactly, up to computer precision; we introduce the \textsc{Ectoplasm} package that implements this algorithm, and we demonstrate this is fast and convenient enough to compute distances in real life data sets. We also discuss the implications of this algorithm to related problems in shape analysis, such as shape inversion and sub-shape selection. We also show a proof-of-concept application for solving the shape alignment problem with gradient descent and adaptive grid search, which are two powerful methods neither of which is possible using the discretized transform.
\end{abstract}

\maketitle

\section{Introduction}
\label{sec:intro}

Shapes are difficult to analyze statistically, because they have no obvious metric structure needed for statistical machinery. Without a metric structure, we can not define such basic statistical quantities like mean of two shapes. A standard idea in shape analysis is to pick points on the shapes, declare correspondence between them, and define the distance between shapes to be the distance between the matrices defined by the landmarks, modulo appropriate Euclidean actions. This leads to what is called the \emph{Kendall Shape Space}. Using landmarks leads to loss of information, because shapes are not point clouds, and there is no canonical choice for picking the correspondence between the landmarks. This is especially relevant for the modern age, where our shape data are represented in high fidelity in online repositories.

Another idea is that of diffeomorphism-based shape alignment. This corresponds to a digitalization of Kendall Shape Space as in this approach the shapes are represented as surfaces and their distances are studied via correspondence maps, continuous objects that can achieve arbitrary precision. A key restriction for there to be such a correspondence map is that the shapes must be diffeomorphic. This assumption is not realistic for a multitude of shape data, such as skulls that may have different number of cavities or medical data such as brain MRI.

A third idea for shape analysis is based on integral geometry and geometric measure theory. A powerful tool in this is the Euler Characteristic Transform (ECT), which is mathematically a topological Radon transform. This transform is the subject of this work. This framework allows for statistical analysis beyond landmarks without relying on the diffeomorphism assumption.

All existing methods for computing the ECT rely on evaluating the ECT on a discrete grid of directions, and most of the time, also the heights. This is somewhat undesirable if our goal is to define a digital tool for shape analysis - such an approach still requires us to pick a discrete set of directions, and our inference is based on these, a setup much similar as with landmarks. While there is a theoretical notion of a Moduli space for shapes that can be told apart from each other by finitely many directions \cite{CMT}, this number is very large, indeed much larger than what is used in practice. Also, the result states that such transform is injective for these shapes, but there is to date even a theoretical guarantee on how many directions give a good notion of distance for a given collection of shapes. In practice, this can be only solved by picking more and more directions until the distances seem to stabilize.

In this work we introduce the \textsc{Ectoplasm}, a fully-digital shape analysis tool for dissimilar shapes. The work relies heavily on the observations made in \cite{CMT} and folklore results regarding the analytic form of the transform. We push these results further to develop a fully digital ECT algorithm for shape analysis. In some regard, this work complements the theoretical approach to unify shape theory via sheafification presented in \cite{arya2024sheaf}, with the distinction that from a theoretical perspective the Persistent Homology Transform (PHT) has nicer properties than the ECT, which is preferable from applied perspective and subject of this work.

The main advantages of this framework are threefold: First, the downstream analysis, for example statistical inference, is improved, because the digital transform makes use of all the data. This is especially relevant for getting accurate information on which shapes are closest to each other, a key ingredient in manifold learning applications that rely on small distances to approximate the tangent space. Second, not having to discretize the transform obviates the problem of having to choose the discretization parameters, a major headache for the practitioners. Third, as the transform is computed in its exact form, the rich mathematical structure of the transform can be exploited to its full extent. For example, the digital algorithm allows for a genuine $O(d)$-invariant action. We will discuss these discretization related issues and ways to circumvent them in Section \ref{sec:background}. Finally, the digital approach also gives us a ground truth, which can be used to assess how accurate a given discretization is.

The paper is organized as follows: in Section \ref{sec:background} we review some of the background on ECT and its typical use cases. In Section \ref{sec:algorithms} we give a simplified, brute-force version of the digitalization algorithm in two dimensions.This serves as an introduction to Section \ref{sec:3d}, where we present the 3D-algorithm. The technical details pertaining to spherical integrals are in \ref{secA1}. In Section \ref{sec:case study} we apply the the algorithm to perform exploratory data analysis on a real life data set, and compare the results against the conventional, discretization based method, as well as previously published methods based on landmarks and diffeomorphisms. In Section \ref{sec:alignment}, we study the two aforementioned solutions to the alignment problem that the digital transformation allows for.

\section{Background on the ECT}
\label{sec:background}
In this section we provide a brief background on the ECT and its use cases in applications. We will take as concrete approach as possible when introducing the background material, a more mathematically oriented reader can consult, for example \cite{CMT} for a more general treatment of the subject. All our objects are finite and compactly supported.

The applied upshot of the ECT is that it allows us to compare shapes. This statement necessitates the question of what constitutes a shape. In general, the ECT framework is defined in the flexible language of \emph{$o$-minimal} structures, but as this work is mainly concerned with, and indeed only applicable to, piecewise linear meshes, we will state the relevant results in this language. This is also the relevant framework for practical considerations, because computer standard architecture prefers triangles over curved geometric objects.

Our meshes will be finite geometric simplicial complexes, with two meshes declared identical if they are identical as subsets of $\R^d$. We will recall the somewhat technical definition of a geometric simplicial complex.

\begin{defn}
A (finite) abstract simplicial complex $\Sigma$ is a union of subsets of $\big[N \big]$ (called simplices) that is closed under taking subsets, i.e. if $\tau \le  \sigma$ and $\sigma \in \Sigma$, then $\tau \in \Sigma$. The singletons of $\Sigma$ are called its \emph{vertices}, and the dimension of a simplex is its cardinality minus one.
\end{defn}

\begin{defn}
Let $\Sigma$ be a finite abstract simplicial complex with vertices $x_1,\ldots,x_n$ and $M$ simplices. A \emph{geometric realization map} of $\Sigma$ is an affine map $f$ from a subset of $\R^n$ to $\R^d$ (for $d$ some positive integer), whose domain consists of

\begin{enumerate}
\item The coordinate vectors $e_i$;
\item Any strictly convex combination of a collection of coordinate vectors \\ $\{e_{j_1},e_{j_2},\ldots,e_{j_m}\}$, whenever $\{x_{j_1},x_{j_2},\ldots,x_{j_m} \}$ is a simplex of $\Sigma$.
\end{enumerate}
\end{defn}

Note that by affinity, the geometric realization is completely determined by the image of the coordinate vectors $e_i$, and the combinatorial structure of $\Sigma$.

\begin{defn}
A \emph{geometric simplicial complex} $X$ is an abstract simplicial complex $\Sigma$, together with an injective geometric realization map $f$.
\end{defn}

For our purposes, this definition is too fine, because we are only interested in meshes as subsets of the Euclidean space, typically $\R^3$. Hence we don't care about the combinatorial structure beyond that. This conundrum was also discussed in \cite{CMT}, Remark 5.12.

\begin{defn}
A \emph{mesh} is a geometric simplicial complex as a subset of $\R^d$. 
\end{defn}

We will use the word mesh to distinguish it from a shape, which is usually regarded as a mesh, modulo certain Euclidean symmetries, such as rotations, translations and scalings.

We will require our meshes to be supported in the unit ball. This restriction, albeit unsatisfactory from the mathematical perspective, is well-grounded in the context of Kendall shape space and the requirement that scalar multiples of shapes are equal, as well as the practical wisdom that a collection of meshes should be scaled to the same measure before they are analyzed.

One of the important invariants of a simplicial complex $X$ is its Euler Characteristic. It is indeed a topological invariant of the complex, but for our purposes we can take it to be the alternating sum of its simplex counts.

\begin{defn}
The Euler Characteristic $\chi(X)$ of a geometric simplicial complex $X$ with $b_0$ $0$-simplices (vertices), $b_1$ 1-simplices (edges), and in general $b_j$ $j$-simplices, $j=0,\ldots, n$, is
$$
\chi(X) = \sum_{j=0}^{n} (-1)^{j} b_j.
$$
\end{defn}

The following fact is a consequence of the $o$-minimal Triangulation Theorem from \cite{tametopology}.

\begin{prop}
If mesh $M$ is obtained from two different geometric simplicial complexes $X$ and $Y$, then $\chi(X) =\chi(Y)$, so that $\chi(M)$ is well-defined.
\end{prop}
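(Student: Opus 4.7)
The plan is to invoke the $o$-minimal Triangulation Theorem to produce a common refinement of the triangulations $X$ and $Y$ of $M$, and then to observe that the alternating simplex count is invariant under passage to such a refinement.

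First I would apply the ``compatible with a finite definable partition'' version of the Triangulation Theorem. Since $X$ and $Y$ have finitely many simplices and each (relatively) open simplex is definable in the $o$-minimal structure, the union of the open cells of $X$ with those of $Y$ forms a finite definable partition of $M$. The theorem then yields a geometric simplicial complex $Z$ triangulating $M$ with the property that every open simplex of $X$ and every open simplex of $Y$ is a union of open simplices of $Z$. In particular, $Z$ is a common simplicial refinement of both $X$ and $Y$.

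Next I would show $\chi(X) = \chi(Z)$; by symmetry the same argument gives $\chi(Y) = \chi(Z)$, whence $\chi(X) = \chi(Y)$. The cleanest route is to identify the combinatorial Euler characteristic with the topological one via the Euler--Poincar\'e formula: $\chi(X) = \sum_j (-1)^j b_j = \sum_i (-1)^i \dim H_i(M;\mathbb{Q})$, where the rightmost expression depends only on the underlying point set $M$, not on the chosen triangulation. Applying the same identity to $Z$ forces $\chi(X) = \chi(Z)$. A purely combinatorial alternative would be to decompose the refinement $Z \to X$ into a sequence of elementary stellar subdivisions and verify by a short binomial identity that each such move preserves the alternating face count.

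The main obstacle is ensuring that the Triangulation Theorem delivers a simplicial complex that genuinely \emph{refines} both prescribed simplicial structures, rather than merely triangulating the common point set $M$; this is precisely what the compatibility-with-a-partition clause of the theorem is designed for, so the difficulty is largely bookkeeping about definability of the open cells. Once the common refinement is in hand, the Euler--Poincar\'e step is classical and goes through verbatim after confirming that a finite geometric simplicial complex in the sense of the preceding definitions carries a CW structure whose cellular chain complex computes the singular homology of its underlying space.
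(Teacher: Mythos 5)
Your proposal is correct, but it takes a different route from the paper, which offers no argument beyond citing the $o$-minimal Triangulation Theorem of \cite{tametopology}: the intended proof there is the standard $o$-minimal fact that the combinatorial Euler characteristic attached to a definable set is independent of the chosen cell decomposition (proved in van den Dries by refining decompositions and counting cells, with invariance under definable bijections), applied to the two triangulations of $M$. You instead build a common refinement $Z$ and then identify $\chi$ with the alternating sum of Betti numbers of $M$ via the Euler--Poincar\'e formula. Two remarks on your version. First, once you invoke Euler--Poincar\'e, the entire refinement step is redundant: since $|X| = |Y| = M$ as subsets of $\R^d$, simplicial homology of each complex computes the singular homology of $M$, so $\chi(X) = \sum_i (-1)^i \dim H_i(M;\mathbb{Q}) = \chi(Y)$ directly; this one-line homological argument is arguably cleaner than either the paper's citation or your two-step plan. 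Second, if you do want the common refinement (e.g.\ for the stellar-subdivision alternative), note that the $o$-minimal Triangulation Theorem only produces a complex together with a \emph{definable homeomorphism} onto $M$ compatible with the prescribed definable subsets, not a genuine linear subdivision of $X$ and $Y$ sitting in $\R^d$; that is fine for the $o$-minimal Euler characteristic (which is invariant under definable bijections), but for a purely combinatorial refinement argument you would rather appeal to elementary PL topology (two linear triangulations of the same compact polyhedron admit a common linear subdivision), where no $o$-minimality is needed. What your homological route buys is independence from the $o$-minimal machinery; what the paper's route buys is a statement of well-definedness that extends verbatim to arbitrary definable sets, not just polyhedra.
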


\begin{defn}
\label{def:filtered}
For a mesh $X \subset \R^d$, and a direction $v \in S^{d-1}$, we can subset $X$ at height $h$:

$$
X_{v}(h) = \{ x \in X | x \cdot v \le h \}.
$$
\end{defn}
We will call the subset $X_{v}(h)$ the \emph{ filtration of $X$ to height $h$ at direction $v$}. 

For a fixed $v$, this is actually a \emph{filtration} of topological spaces indexed by $h \in \R$ in that $X_v(h_1) \subseteq X_v(h_2)$ whenever $h_1 \le h_2$. 

Note that the restriction $\{ x \in \R^d | x \cdot v \le h, x \in X \}$, is a piecewise linear set, but it need not be a subcomplex of $X$. However, we can effectively treat it as such, which is the subject of the following definition and theorem.

\begin{defn}
\label{def:restriction}
Write $X_{v,h}$ for the subcomplex of $X$ consisting of the simplices that are contained in the half-space
$$
H_{v,h} = \{ x \in \R^d | x \cdot v \le h \}.
$$

We call $X_{v,h}$ the \emph{ restriction of $X$ to height $h$ at direction $v$}. 
\end{defn}

We are now ready to define the Euler Characteristic Transform, the main subject of this work:

\begin{defn}
\label{ect}
Let $X \subset \R^d$ be a mesh, and $X_{v,h}$ the subcomplex from Definition \ref{def:restriction}. The  \emph{Euler Characteristic Transform} $\textrm{ECT}_X$ of $X$ is an integer-valued function on $S^{d-1} \times \R$ defined as

$$
ECT_X(v,h) = \chi(X_{v,h}).
$$
\end{defn}

While we don't need the general definition for Euler Characteristic of a topological space for computing anything, we note the following result which is a consequence of \cite{bestvina1997morse}, and it justifies the filtration viewpoint of Definition \ref{ect}. We state this result for reference: 

\begin{thm}
For a mesh $X \subset \R^d$, and a direction $v \in S^{d-1}$, and height $h \in \R$, we have:
$$\chi(X_{v}(h)) = \chi(X_{v,h}).$$
\end{thm}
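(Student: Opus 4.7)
The plan is to exhibit a strong deformation retraction from $X_v(h)$ onto $X_{v,h}$. Both sets are compact: $X$ is a finite union of closed simplices and $X_v(h)$ is its intersection with the closed half-space $H_{v,h}$, while $X_{v,h}$ is a finite subcomplex. By the $o$-minimal Triangulation Theorem, $X_v(h)$ is a compact polyhedron, so a deformation retraction forces the two Euler characteristics to agree by homotopy invariance of $\chi$ on this class of spaces.

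First I would classify each closed simplex $\sigma$ of $X$ by how it meets the half-space. Let $V^-(\sigma)$ denote the set of vertices $v_i$ of $\sigma$ with $v_i \cdot v \le h$. If $V^-(\sigma)$ is all of $\sigma$'s vertices, then $\sigma \subset H_{v,h}$ and $\sigma \in X_{v,h}$. If $V^-(\sigma) = \emptyset$, then every convex combination of vertices of $\sigma$ satisfies $x \cdot v > h$, so $\sigma \cap H_{v,h} = \emptyset$. Otherwise, the face $\sigma^- := \operatorname{conv}(V^-(\sigma))$ of $\sigma$ lies in $X_{v,h}$, and $\sigma \cap H_{v,h}$ is a convex polytope properly containing $\sigma^-$; this is the ``extra material'' of $X_v(h)$ over $X_{v,h}$ that the retraction has to collapse.

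Next I would build the retraction simplex by simplex. For $x = \sum_i \lambda_i v_i \in \sigma \cap H_{v,h}$ with $V^-(\sigma) \ne \emptyset$, write $s(x) = \sum_{v_i \in V^-(\sigma)} \lambda_i$ and
$$
\bar{s}(x) = \frac{1}{s(x)} \sum_{v_i \in V^-(\sigma)} \lambda_i v_i.
$$
Here $s(x) > 0$, since otherwise $x$ would be a convex combination of vertices with $v_i \cdot v > h$ and could not lie in $H_{v,h}$. The point $\bar{s}(x)$ lies in $\sigma^-$, and the segment from $x$ to $\bar{s}(x)$ stays in the convex set $\sigma \cap H_{v,h}$. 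I then set $F(x,t) = (1-t) x + t \bar{s}(x)$; this is the identity at $t = 0$, lands in $X_{v,h}$ at $t = 1$, and fixes $X_{v,h}$ throughout, so it is a strong deformation retraction onto $X_{v,h}$.

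The main obstacle is global coherence of $F$: if $x$ lies on a face $\tau$ shared by two simplices $\sigma$ and $\sigma'$, the formulas for $\bar{s}$ computed from each ambient simplex must agree. This follows from the fact that the barycentric coordinates of $x$ with respect to $\sigma$ and $\sigma'$ coincide on the vertices of $\tau$ and vanish on the remaining vertices, so both formulas reduce to the same expression supported on $V^-(\tau)$. Once coherence is settled, continuity of $F$ on $X_v(h) \times [0,1]$ reduces to continuity on each closed simplex, which is immediate. The claimed equality $\chi(X_v(h)) = \chi(X_{v,h})$ then follows from homotopy invariance of the Euler characteristic for compact polyhedra.
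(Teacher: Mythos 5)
Your proof is correct. The paper itself does not prove this theorem; it only records it as a consequence of Bestvina--Brady Morse theory, and the argument you give --- an explicit strong deformation retraction of $X_v(h)$ onto $X_{v,h}$ obtained by pushing, within each simplex, the barycentric mass off the vertices of height $>h$ --- is precisely the standard lemma underlying that citation: since the height function is affine on each simplex, $X_{v,h}$ is the full subcomplex spanned by the vertices of height at most $h$, and your map $F$ is the classical retraction onto it. Two small points worth tightening, neither of which is a gap: the containment $\sigma^- \subseteq \sigma \cap H_{v,h}$ need not be \emph{proper} (for instance when every vertex of $V^-(\sigma)$ lies exactly at height $h$), though nothing in your argument uses properness; and it helps to state explicitly that $\bar{s}(x)=x$ whenever $x\in\sigma^-$, because the barycentric coordinates on the vertices above $h$ vanish there --- this is what makes $F$ fix $X_{v,h}$ pointwise and shows that the images $\sigma^-$ glue to exactly $X_{v,h}$ rather than some smaller subcomplex. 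Your final appeal to homotopy invariance is legitimate since both $X_v(h)$ and $X_{v,h}$ are compact polyhedra, on which the combinatorial Euler characteristic coincides with the alternating sum of Betti numbers; this also matches how the paper makes $\chi(X_v(h))$ well defined via the triangulation theorem.
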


The upshot of a more theoretical framework is that by a well-known consequence of the work by Schapira \cite{Schapira:tom} is that $ECT_X(v,h)$ determines $X$ completely. This is also relevant for what is done in this paper, as the results that we need follow from this framework. However, we do not need this machinery for the exposition of the work that we do here, readers are referred to \cite{CMT} for the details.

\subsection{The ECT distance}

With the ECT, we can define the ECT inner product between two meshes $X$ and $Y$ as

\begin{equation}
\langle X,Y \rangle  = \Big( \int_{-1}^{1} \int_{S^{d-1}} \textrm{ECT}_X(v,h)\textrm{ECT}_Y(v,h) \ dv dh \Big).
\label{eq:inner_product}
\end{equation}

This of course gives us a notion of norm, or distance between two meshes:
\begin{equation}
||X-Y||^2  = \langle X-Y,X-Y \rangle  =  \langle X,X \rangle - 2 \langle X,Y \rangle + \langle Y,Y \rangle.
\label{eq:norm}
\end{equation}

The expression (\ref{eq:norm}) of squared distance above is especially convenient for our purposes. We note of the equivalent expression, which is more common in the literature:

\begin{equation}
d_{ECT}(X,Y)  = \Big( \int_{-1}^{1} \int_{S^{d-1}} \big( \textrm{ECT}_X(v,h)-\textrm{ECT}_Y(v,h) \big)^2 \ dv dh \Big)^{1/2}.
\label{eq:dist}
\end{equation}

Computing the distances in  Equation (\ref{eq:dist}) seems like a daunting task, for the integral is taken over the sphere. In practice, this expression is always approximated via some sort of discretization. While the integral indeed is very complicated, with the help of a computer it can be computed analytically. An effective algorithm for doing so is the main result of this paper. Before introducing this algorithm, we will make a few comments on the discretization and its properties.

\subsection{Discretization of ECT}

A straightforward way to discretize the transform and to make Equation (\ref{eq:dist}) computable is to pick a set of direction $v_1,v_2,\ldots,v_n$ and a set of heights $h_1,\ldots,h_m$ and then represent the (discretized) transform as an $n \times m$ vector (or matrix). For these discretized objects, (\ref{eq:dist}) is nothing more than the $L_2$ norm of the matrices. If we hook this idea back to landmarks, we are ultimately performing a similar analysis: Our inference is based on matrices, that represent finitely many directions evaluated at finitely many heights. This bears resemblance to the landmark problem - how do we choose the directions? We will discuss the merits and demerits of the approximation in this subsection.

One of the main results in this direction is the complexity class result in \cite{CMT}, Theorem 7.14. -- that there is an upper bound on how many directions are required to tell two shapes in a certain complexity class apart. This is a mathematical result, meaning that by picking that many directions guarantees that the discretized transform is injective on the complexity class. This result, being one of the only ones in the literature, is important, but we don't have sufficient understanding on what is the best procedure to pick the discretization in practice for a collection of shapes. We can not, for example, make statements that $N$ many directions explain $99\%$ of the variability in distance, or to say which directions should be chosen for best representation of the shapes if we can only afford $n$ many directions. The lack of these type of results makes using these methods harder, because there is no clear cut way to choose the discretization parameters.

This problem is exacerbated when the statistical question is something more complicated than regression or classification. If our only goal is to do classification, and we use a kernel method, we only need to worry about approximating the distance matrix well, because all of our inference is based on that. Then the conventional wisdom to pick $n$ and $m$ as large as possible, and pick the directions (and heights) as uniformly as possible, seems to be a reasonable advice. On the other hand, if we are interested in performing a more complicated task, such as feature selection, which was done in \cite{SINATRA}, then the choice of discretization parameters becomes a double-edged sword. One one hand, we would like to increase $n$ and $m$ to make the distances more accurate. But on the other hand, if we pick $n$ and $m$ too large, the variation in the $n \times m$ regression parameters can too homogeneous, causing at best multicollinearity, and at worst, making the parameters not even estimable. 

The digitalization procedure proposed in this work alleviates these two problems: By not using a discretization, the distances are computed (up to computer precision) in closed form, so we know our inference is as good as it can get. Also, the digitalization obviates the need to choose the discretization parameters in the first place, so that multicollinearity of features will not be an issue. Of course, one needs to rework all the statistical machinery for variable selection, because with the digital transform we replace vectors with functions, so that the statistical machinery must involve functional data analysis. This interesting problem is left for future research.

Another advantage that the digital version has is that it is better suited for capturing small differences in the meshes. This is especially relevant for applications that make use of tangent space approximations, such as ISOMAP. Because smaller distances are used for constructing the tangent space, a method that is more accurate in getting the small distances right will give more accurate inference.

Furthermore, the discretized version suffers a small inconvenience when it comes equivariance. Mathematically, the ECT is an $O(d)$-equivariant transform, meaning that for a rotation matrix (possibly including reflection) $A$, we have

$$
\textrm{ECT}_{AX}(v,h) = \textrm{ECT}_{X}(Av,h).
$$

If one uses the discretized version, one only has access to finitely many directions. In a typical use case, these directions do not form an orbit under $A$, so that we don't have access $\textrm{ECT}_{X}(Av,h)$ from $\textrm{ECT}_{X}(v,h)$. This means they must be somehow approximated, and this complicates the analysis and can lead to costly computations. See \cite{SINATRA} Section 4 in the supplementary material for details.

On the other hand, from the digitalized version, $\textrm{ECT}_{X}(Av,h)$ is readily available from $\textrm{ECT}_{X}(v,h)$. What is more, the alignment penalty map

$$
A \mapsto -2 \langle X,AY \rangle
$$

is, for generic meshes $X$ and $Y$, almost everywhere differentiable. This phenomenon has been discussed in e.g. \cite{leygonie2022framework}. This means that in theory, with the digital transform, the alignment problem is amenable to a gradient descent type solution (rather than grid search). We study this further in Section \ref{sec:alignment}.

Finally, the discretized version has no hope of using the Schapira's inversion formula to go from transforms back to shapes. With the digitalized version of the transform, it is conceivable that Schapira-type inversion formula could be used for pulling back evidence back on the original problem.

\section{The Algorithm}
\label{sec:algorithms}

The purpose of this Section is to explain how we can evaluate Expression (\ref{eq:dist}) analytically. To do so, we first introduce a brute force algorithm for the 2D case, where the formulae are simpler and the transform is easier to visualize. We postpone the actual 3D-algorithm with additional refinements to Section \ref{sec:3d}.
The algorithm, while quite complicated, makes use of several well-known facts of the ECT. We will state these facts shortly. With these facts, we will describe the transforms (or equivalently the meshes) in what we will call the \emph{proto-transfom} format. This format is a bridge between the mesh and its transform, and a key to the digitalization algorithm.

The first well-known fact about the ECT that we want to state is that for a piecewise linear mesh, the Euler Curve $f_v(h)$ can only change at the vertices.

\begin{prop}
\label{propa1}
Let $X$ be a piecewise linear mesh with vertices $\{x_1,\ldots, x_n\}$ and $v$ a direction such that $\langle x_i,v \rangle  \neq \langle x_j,v \rangle $ for $i \neq j$ and $f_v(h)$ the Euler curve in direction $v$. Then $f_v(h)$ can only change at the vertices, that is at heights $h_i$ given by $h_i=\langle v,x_i \rangle $ for some vertex $x_i$ of $X$. 
\end{prop}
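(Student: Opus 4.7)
The plan is to work directly from Definition \ref{def:restriction}: $X_{v,h}$ is the subcomplex of $X$ consisting of those simplices contained in the half-space $H_{v,h}$. So I want to argue that the \emph{set of simplices} in $X_{v,h}$ is constant on every open interval of heights that contains no $h_i$, from which the Euler characteristic $\chi(X_{v,h})$ is constant on that interval as well.

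First I would reduce simplex-containment to vertex-containment. Each simplex $\sigma$ of $X$ is, by the definition of a geometric realization, the affine image of a (closed) simplex in $\R^n$, hence the convex hull of its vertices $\{x_{j_1},\ldots,x_{j_k}\}$ inside $\R^d$. Since $H_{v,h}$ is convex, $\sigma \subseteq H_{v,h}$ if and only if every vertex $x_{j_\ell}$ of $\sigma$ lies in $H_{v,h}$, i.e.\ $\langle v,x_{j_\ell}\rangle \le h$ for all $\ell$. Equivalently, setting $h_\sigma = \max_{\ell}\langle v,x_{j_\ell}\rangle$, we have $\sigma \in X_{v,h}$ iff $h_\sigma \le h$. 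Crucially, $h_\sigma = \langle v, x_i\rangle = h_i$ for some vertex $x_i$ of $X$ (one of those achieving the maximum).

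Next I would read off the proposition. Fix an open interval $(h',h'')$ disjoint from $\{h_1,\ldots,h_n\}$. For every simplex $\sigma$, the condition $h_\sigma \le h$ is, on this interval, either identically true or identically false (it could only flip at $h = h_\sigma \in \{h_1,\ldots,h_n\}$). Therefore the collection of simplices making up $X_{v,h}$ is literally the same subcomplex for all $h \in (h',h'')$. Since $\chi(X_{v,h}) = \sum_j (-1)^j b_j(X_{v,h})$ depends only on the simplex counts, $f_v(h)=\chi(X_{v,h})$ is constant on $(h',h'')$, proving that $f_v$ can change only at the heights $h_i$.

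There is no real obstacle; the only point worth being careful about is that $X_{v,h}$ is defined by a ``half-space containment'' condition on whole simplices rather than by intersecting $X$ with $H_{v,h}$ (which would generally produce a non-simplicial set). The convexity argument above is exactly what bridges these two viewpoints, and it is also why one obtains $h_\sigma \in \{h_1,\ldots,h_n\}$ rather than some height strictly inside a simplex. The hypothesis $\langle v,x_i\rangle \neq \langle v,x_j\rangle$ for $i\neq j$ is not used in the argument itself; it merely ensures the critical heights $h_i$ are distinct, which is convenient for the downstream algorithmic discussion.
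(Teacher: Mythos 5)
Your argument is correct. Note, however, that the paper does not actually prove Proposition \ref{propa1} in-house: it disposes of it (together with Proposition \ref{propa2}) by citing Proposition 5.18 and Lemma 5.19 of \cite{CMT}. What you give instead is a self-contained, elementary proof straight from Definition \ref{def:restriction}: since the half-space $H_{v,h}$ is convex and each geometric simplex is the convex hull of its vertices, $\sigma \subseteq H_{v,h}$ iff all vertices of $\sigma$ lie below height $h$, so membership of $\sigma$ in $X_{v,h}$ switches only at $h_\sigma = \max_\ell \langle v, x_{j_\ell}\rangle$, which is one of the vertex heights $h_i$; hence the subcomplex, and therefore $f_v(h) = \chi(X_{v,h})$ as in Definition \ref{ect}, is constant on any interval avoiding the $h_i$. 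This buys transparency and independence from the stratification machinery of \cite{CMT}, at the cost of being tied to the subcomplex formulation (the statement for the genuine sublevel sets $X_v(h)$ of Definition \ref{def:filtered} then rides on the theorem $\chi(X_v(h)) = \chi(X_{v,h})$ quoted in the paper, which you implicitly sidestep by working with $X_{v,h}$ throughout — consistent with how the ECT is defined here). Your closing remark is also accurate: the genericity hypothesis $\langle v,x_i\rangle \neq \langle v,x_j\rangle$ is not needed for this proposition itself, only for the subsequent bookkeeping of gains per vertex in Definition \ref{defa1}.
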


\begin{defn}
\label{defa1}
Following the notation and assumptions of Proposition \ref{propa1} , vertex $x_i$ is called \emph{Euler}-critical (with respect to $v$) and $h_i$ is called a \emph{critical height}. We will call

$$
a_i(v) := f_v(h_i)-f_v(h_i-\epsilon),
$$
where $\epsilon$ is a small positive number, for example $\textrm{min}_{j,k} \langle x_j - x_k,v \rangle/2$,

the \emph{gain} associated to vertex $x_i$ in direction $v$.
\end{defn}

Next we describe a consequence of the stratification result in \cite{CMT}. The argument stated here has also appeared previously in \cite{PHT}.

\begin{prop}
\label{propa2}
Let $X$ be a mesh with vertices $\{x_1,\ldots, x_n\}$. If two directions $v$ and $\nu$ satisfy

$$
\langle v,x_i \rangle  \le \langle v,x_j \rangle  \textrm{ if and only if } \langle \nu,x_i \rangle  \le \langle \nu,x_j \rangle
$$

for all $i,j \in [n]$, the directions $v$ and $\nu$ are equivalent, and they have the same critical vertices with same gains.
\end{prop}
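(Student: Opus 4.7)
The plan is to identify the gain $a_i(v)$ at each vertex with an alternating count of simplices that depends only on the ordering of vertices induced by $v$, and then use the hypothesis to conclude the same count works for $\nu$.

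First, under the genericity assumption of Proposition \ref{propa1}, the set-theoretic difference $X_{v,h_i} \setminus X_{v,h_i - \epsilon}$ consists of exactly those simplices $\sigma$ of $X$ containing $x_i$ such that $x_i$ is the strict $v$-maximum among the vertices of $\sigma$. Both $X_{v,h_i}$ and $X_{v,h_i - \epsilon}$ are honest subcomplexes of $X$, so Euler characteristic is the alternating simplex count, yielding
$$
a_i(v) \;=\; \sum_{\substack{\sigma \ni x_i \\ \langle v, x_j\rangle \,<\, \langle v, x_i\rangle \text{ for every } x_j \ne x_i \text{ in } \sigma}} (-1)^{\dim \sigma}.
$$

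Next, I would apply the hypothesis. The given biconditional for the weak order transfers automatically to the strict one: $\langle v, x_j\rangle < \langle v, x_i\rangle$ iff $\langle v, x_i\rangle \le \langle v, x_j\rangle$ fails, iff the corresponding $\nu$-inequality fails, iff $\langle \nu, x_j\rangle < \langle \nu, x_i\rangle$. By the same logic, equality is preserved in both directions, so $v$ satisfies the genericity assumption of Proposition \ref{propa1} iff $\nu$ does. It follows that the indexing set in the displayed sum is identical for $v$ and $\nu$, hence $a_i(v) = a_i(\nu)$; in particular, the critical vertices (those with nonzero gain) coincide. The ``equivalence'' of the two directions is then just a restatement of the hypothesis that they induce the same order on $\{x_1, \ldots, x_n\}$.

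The main thing to verify carefully is the reduction of $a_i(v)$ to a sum that depends purely on the combinatorial ordering of vertices; this is the only substantive step and relies on the fact that both $X_{v,h_i}$ and $X_{v,h_i-\epsilon}$ are subcomplexes for which Euler characteristic is computed combinatorially as an alternating simplex count. Once this formula is established, the conclusion follows by direct substitution using the preserved strict order.
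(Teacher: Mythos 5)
Your argument is correct, but it takes a different route from the paper: the paper does not prove this proposition at all, instead deferring to Propositions 5.18 and Lemma 5.19 of \cite{CMT}, which rest on the stratification of the sphere developed there. You instead give a short, self-contained piecewise-linear argument: since a simplex lies in the half-space $H_{v,h}$ exactly when all of its vertices do (the maximum of a linear functional on a simplex is attained at a vertex), the difference $X_{v,h_i}\setminus X_{v,h_i-\epsilon}$ is precisely the set of simplices of $\mathrm{Star}(x_i)$ on which $x_i$ is the strict $v$-maximum, so that
$$
a_i(v)=\sum_{\substack{\sigma\ni x_i\\ x_i \text{ strict } v\text{-max in }\sigma}}(-1)^{\dim\sigma},
$$
a quantity depending only on the strict order induced by $v$ on the vertices; your observation that the hypothesis on weak inequalities transfers to strict inequalities and to equalities (so that $v$ is generic iff $\nu$ is, and gains are defined for one iff for the other) then finishes the proof. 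What your approach buys is transparency and self-containedness — it makes explicit why only the vertex ordering matters, and it is essentially the same local-star computation the paper later invokes in Section \ref{sec:3d} to justify the 3D algorithm. What the citation buys is generality: the result in \cite{CMT} is established in the broader $o$-minimal/stratification framework rather than only for finite geometric simplicial complexes, which is the setting the paper leans on elsewhere. The only point worth flagging is that the statement's ``gains'' are only defined (Definition \ref{defa1}) under the genericity hypothesis of Proposition \ref{propa1}; you handle this correctly by noting the hypothesis forces $v$ and $\nu$ to be generic or non-generic together, but it would be worth saying explicitly that the proposition is read under that standing assumption.
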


\begin{proof}[Proofs of Propositions \ref{propa1}, \ref{propa2}]
See Propositions 5.18 and Lemma 5.19 of \cite{CMT}.
\end{proof}

Next we will state a simple fact of the height function with an explicit formula in dimension 2.

\begin{prop}
\label{2dheight}
Let $p=(x,y)$ be a vertex in $\R^2$. Its height function $h$ with respect to $v$ parametrized as $v=\big(\cos(t),\sin(t) \big)$ is given by
$$
h_p(v) = p \cdot v   = x \cos(t) + y  \sin(t),
$$

so that its integral over a polygon $P_k=[\tau,\theta]$ is given by:

\begin{align*}
I_{P_k}(p) & = \int_{\tau}^{\theta} \cos(t) x + \sin(t) y \ dt \\
&= \Big( \cos(\tau) - \cos(\theta) \Big)y + \Big( \sin(\theta) -\sin(\tau) \Big)x.
\end{align*}
\end{prop}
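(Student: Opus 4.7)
The statement has two parts: the closed form for the height function, and the evaluation of its integral over an arc $[\tau,\theta]\subset S^{1}$. For the first part, the plan is simply to unfold the definition of the height function, which was defined (implicitly, via Definition \ref{def:filtered}) as the linear functional $x\mapsto x\cdot v$. Substituting the given coordinates $p=(x,y)$ and the parametrization $v=(\cos t,\sin t)$ into the Euclidean inner product yields $h_p(v)=x\cos t+y\sin t$ immediately. No work is needed here beyond invoking the definition and the parametrization of $S^{1}$.

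For the second part, the plan is a one-line antiderivative computation. Since $x,y$ are constants (the fixed coordinates of $p$), linearity of the integral splits the integrand into $x\int_{\tau}^{\theta}\cos t\,dt+y\int_{\tau}^{\theta}\sin t\,dt$, and the standard antiderivatives $\sin t$ and $-\cos t$ give, after evaluation at the endpoints, $x(\sin\theta-\sin\tau)+y(\cos\tau-\cos\theta)$, which is exactly the claimed expression. There is no conceptual obstacle — the only thing to watch is the sign flip coming from $\int\sin t\,dt=-\cos t$, which is responsible for the order $\cos\tau-\cos\theta$ (rather than $\cos\theta-\cos\tau$) in the coefficient of $y$; beyond that bookkeeping the result is immediate.
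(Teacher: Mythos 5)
Your proposal is correct and is exactly the "straightforward calculation" the paper invokes: unfold $h_p(v)=p\cdot v$ with the parametrization of $S^1$, then integrate termwise using the antiderivatives $\sin t$ and $-\cos t$, with the sign flip accounting for the $\cos\tau-\cos\theta$ coefficient of $y$. Nothing further is needed.
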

\begin{proof}
A straightforward calculation.
\end{proof}

With these facts we can now define the proto-transform format, as well as a brute force algorithm for computing it in two dimensions. The idea behind the proto-transform format is to list the finite pieces of the transform in a systematic format that allows us to compute Expression (\ref{eq:dist}) with ease.

Let $X$ be a 2D-mesh with vertices $\{x_1,\ldots, x_n\}$.
\begin{itemize}
\item Solve $\langle v,x_i \rangle  = \langle v,x_j \rangle $ for all $i \neq j$, get $v_{i,j}, -v_{i,j}$
\item Order the $v_{i,j}^{\cdot}$ by their angle, get pieces $P_k=[\theta_k,\theta_{k+1}]$, $k=1,\ldots,N$.
\item Set $\alpha_k= \frac{\theta_k+\theta_{k+1}}{2}$
\item Look at the order of $x_i$ under $\langle \alpha_k, x_i \rangle $.
\item For each $x_i$, evaluate the discrete ECT at height $h_i + \epsilon$, i.e. just above $x_i$ (with e.g. $\epsilon$ from Definition \ref{defa1}). Record this value to $a_{k,i}$, and record $x_i$ to $b_{k,i}$.
\item The ECT of $X$ is now represented as $\bigcup_{k=1}^{N} (P_k, \textbf{a}_k, \textbf{b}_k )$.
\end{itemize}

We call the the collection $\bigcup_{k=1}^{N} (P_k, \textbf{a}_k, \textbf{b}_k )$ the \emph{proto-transform} format of $X$. Next we will show how we can use this format to compute integrals of the form \ref{eq:dist} in 2-dimensions.

The first observation is that inside the pieces $P_k$, the order of the vertices stays the same. Then, inside each  $P_k$ (and heights $[-1,1]$), the $ECT_X(v,h)$ is a simple function described as

$$
ECT_{X | P_k}(v,h) = \sum_{i=1}^{n_k} a_{i,k} \big( 1_{ \ge \langle v,b_{k,i} \rangle }(h)1_{ \le \langle v,b_{k,i+1} \rangle }(h) \big) ,
$$

where we define $\langle v,b_{k,n_k+1} \rangle$ to be identically 1.

This expression is again nothing new, it is equivalent to the formula in Lemma 5.19 of \cite{CMT}, expressed in explicit coordinates.

This is a simple function of the height functions from Proposition \ref{2dheight}, whose integral is readily available.

To compute the (squared) distance in Equation (\ref{eq:norm}), we first note that each of the three pieces

\begin{equation*}
d_{ECT}(X,Y)^2 = ||X-Y||^2 = \langle X,X \rangle - 2 \langle X,Y \rangle + \langle Y,Y \rangle.
\end{equation*}

are just simple functions of the height functions from Proposition \ref{2dheight}, because the product of simple functions is simple.

Explicitly, if the ECT of $X$ is represented as $\bigcup_{k=1}^{N} (P_k, \textbf{a}_k, \textbf{b}_k )$ and we write $P_k = [\theta_k,\theta_{k+1}]$, the first term $\langle X,X \rangle$ can be expressed as: 

\begin{align*}
\langle X,X \rangle & = \int_{-1}^{1} \int_{S^{d-1}} \sum_{k=1}^{M} \sum_{i=1}^{n_k} a_{i,k}^2 \big( 1_{ \ge \langle v,b_{k,i} \rangle }(h)1_{ \le \langle v,b_{k,i+1} \rangle }(h) \big) \ dv \ dh \\
& = \sum_{k=1}^{M} \int_{-1}^{1} \int_{P_k}  \sum_{i=1}^{n_k} a_{i,k}^2 \big( 1_{ \ge \langle v,b_{k,i} \rangle }(h)1_{ \le \langle v,b_{k,i+1} \rangle }(h) \big)  \big) \ dv \ dh \\
& = \sum_{k=1}^{M} \Big( a_{n_k,k}^2 (\theta_{k+1}-\theta_k ) - a_{1,k}^2 I_{P_k}(b_{1,k}) + \sum_{i=2}^{n_k}I_{P_k}(b_{i,k}) \big(a_{i+1,k}^2-a_{i,k}^2 \big) \Big).
\end{align*}

Here $I_{P_k}(b_{i,k}) $ is the integral over the height function of the vertex $b_{i,k}$ over polygon $P_k$, which was explicitly computed in Proposition \ref{2dheight}. The integral is then just a sum of trigonometric functions, and analytically tractable.

Similar reasoning applies for the other two terms, but note that the cross-product term in the middle requires us to overlay the pieces $P_k$ of $X$ and $Q_k$ $Y$ to a common subdivision. See Example \ref{example1} for an explicit computation.

Note that the algorithm that we have described in this section is just for illustration purposes. In fact, it is very inefficient. For two meshes with $n$ and $m$ vertices, we would first need to compute $\binom{n}{2}$ and $\binom{m}{2}$ pairwise intersections to get $P_k, Q_k$, and then we still have to perform the expensive operation of overlaying these two triangulations. Luckily, this complexity can be greatly reduced. As a matter of fact, we don't have to compute all the pairwise intersections, and we can also make the pieces $P_k$ larger than what was described here. We will discuss these improvements in the next section, where we also introduce the algorithm for 3 dimensions, which is also more interesting from the perspective of real-life applications.

\begin{example}
\label{example1}
\begin{figure}[!ht]
\includegraphics[width=\linewidth]{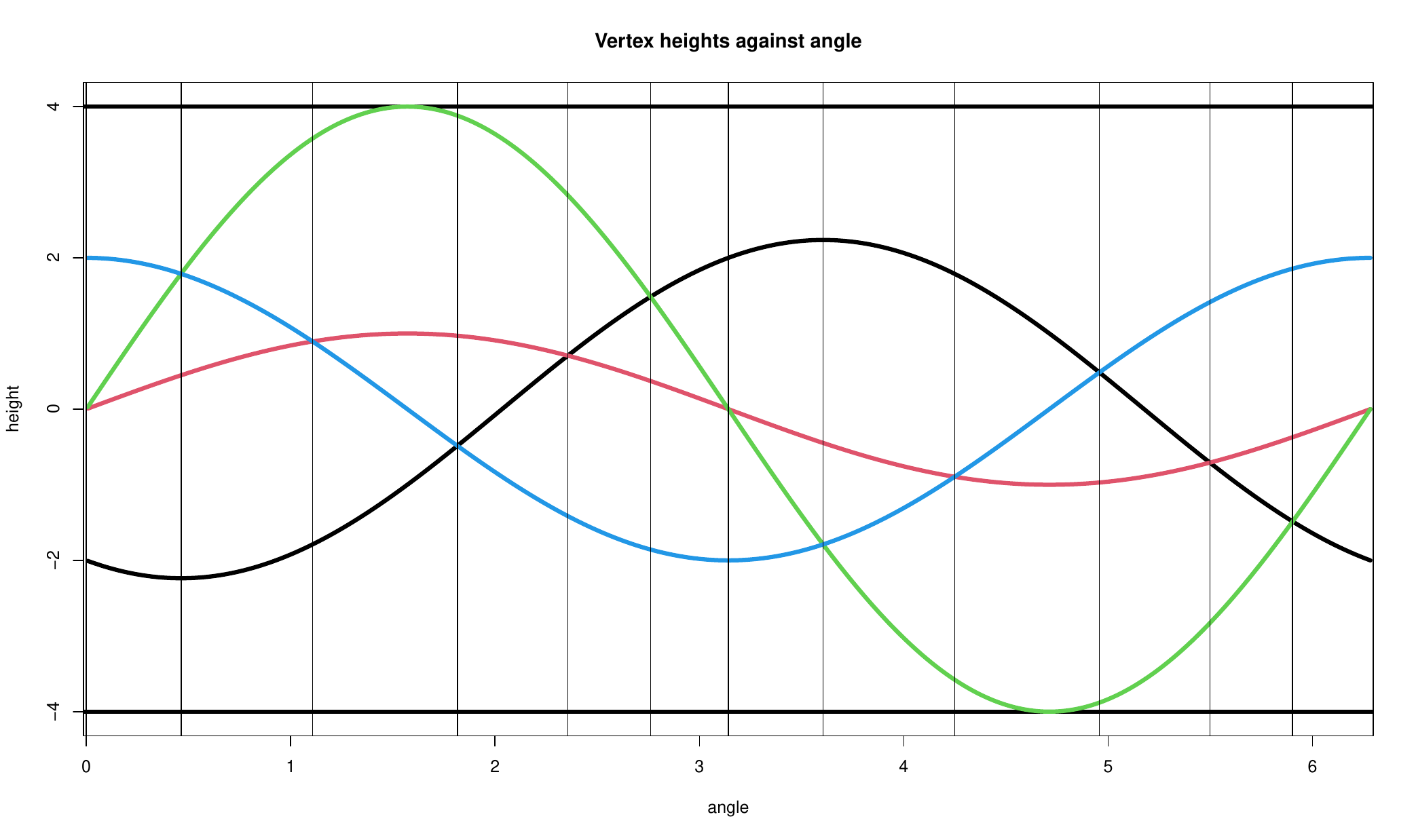}  
\caption{Example \ref{example1}, the height functions of the 4 vertices. The vertical lines represent equi-height angles, or endpoints of each $P_k$. Vertices 1,2,3 and 4 are represented in black, red, green and blue, respectively.}
	\label{ex1a}
\end{figure}

In this example we compute the ECT inner product of a 2D-triangle by hand, following the brute force algorithm. Let $\Sigma$ be an abstract simplicial complex $\{(123,234)\}$, and consider its geometric realization $X$  given by map $f_X: e_1 \mapsto (-2,-1), e_2 \mapsto (0,1), e_3 \mapsto (0,4)$ and $e_4 \mapsto (2,0)$. For simplicity of computations, we will not scale $X$ to the unit ball (nor center it), instead we shall carry the integration out over the origin-centered ball of radius $4$.

We first proceed by solving the equi-height angles. For two pairs of vertices $(x_1,y_1)$ and $(x_2,y_2)$, these are given by $\theta = \textrm{atan}(D) = \textrm{atan} \big(\frac{x_1-x_2}{y_2-y_1}\big)$ These are given in the Table \ref{table:1} below, and also depicted in Figure \ref{ex1a}.

\begin{table}[!ht]
\caption{Auxiliary computations for solving equi-height angles}
\label{table:1}
\begin{center}
\begin{tabular}{ |c|c|c|c|c|c| } 
 \hline
 $x_1$ & $y_1$ & $x_2$ & $y_2$ & $D$ & Name \\ \hline
 $-2$ & $-1$ & $0$ & $1$ & $-1$ & $D_{12}$ \\ \hline
 $-2$ & $-1$ & $0$ & $4$ & $-\frac{2}{5}$ & $D_{13}$ \\ \hline
 $-2$ & $-1$ & $2$ & $0$ & $-4$ & $D_{14}$ \\ \hline
 $0$ & $1$ & $0$ & $4$ & $0$ & $D_{23}$ \\ \hline
 $0$ & $1$ & $2$ & $0$ & $2$ & $D_{24}$ \\ \hline
 $0$ & $4$ & $2$ & $0$ & $\frac{1}{2}$ & $D_{34}$  \\
 \hline
\end{tabular}
\end{center}

\end{table}

Ordering the angles from table \ref{table:1}, and investigating the order $\textbf{b}_k$ of the vertices $x_i$ inside each $P_k$ together with the Euler characteristic $\textbf{a}_k$, we can represent the ECT of $X$ in a table (Table \ref{table:2}), with $P_k=[\theta_k,\theta_{k+1}]$:

\begin{table}[!ht]
\caption{The proto-transform format of mesh $X$. Each row lists a polygon, together with the order of vertices inside the polygon ($\textbf{b}_k$), and ($\textbf{a}_k$) lists the Euler characteristic after each vertex. For convenience, the angles are expressed in terms of trigonometric functions.}
\label{table:2}
\begin{center}
\begin{tabular}{ |c|c|c|c|c|c|c| } 
 \hline
$k$ & $ \tan(\theta_k) $ & $\tan(\theta_{k+1})$ & $\cos(\theta_k)$ & $\sin(\theta_k)$ & $\textbf{b}_k$ & $\textbf{a}_k$ \\ \hline
 $1$ & $0$ & $1/2$ & $1$ & $0$ &  $(p_1,p_2,p_3,p_4)$ & $(1,1,1,1)$ \\ \hline
 $2$ & $1/2$ & $2$ & $2/\sqrt{5}$ & $1/\sqrt{5}$ &  $(p_1,p_2,p_4,p_3)$ & $(1,1,1,1)$ \\ \hline
 $3$ & $2$ & $-4$ & $1/\sqrt{5}$ & $2/\sqrt{5}$ &  $(p_1,p_4,p_2,p_3)$ & $(1,2,1,1)$ \\ \hline
 $4$ & $-4$ & $-1$ & $-1/\sqrt{17}$ & $4/\sqrt{17}$ &  $(p_4,p_1,p_2,p_3)$ & $(1,2,1,1)$ \\ \hline
 $5$ & $-1$ & $-2/5$ &$-1/\sqrt{2}$ &$1/\sqrt{2}$ &  $(p_4,p_2,p_1,p_3)$ & $(1,1,1,1)$ \\ \hline
 $6$ & $-2/5$ & $0$ & $-5/\sqrt{29}$ & $2/\sqrt{29}$ &  $(p_4,p_2,p_3,p_1)$ & $(1,1,1,1)$ \\ \hline
 $7$ & $0$ & $1/2$ & $-1$ & $0$ &  $(p_4,p_3,p_2,p_1)$ & $(1,1,1,1)$ \\ \hline
 $8$ & $1/2$ & $2$ & $-2/\sqrt{5}$ & $-1/\sqrt{5}$ &  $(p_3,p_4,p_2,p_1)$ & $(1,1,1,1)$ \\ \hline
 $9$ & $2$ & $-4$ & $-1/\sqrt{5}$ & $-2/\sqrt{5}$  &  $(p_3,p_2,p_4,p_1)$ & $(1,1,1,1)$ \\ \hline
 $10$ & $-4$ & $-1$ & $1/\sqrt{17}$ & $-4/\sqrt{17}$&  $(p_3,p_2,p_1,p_4)$ & $(1,1,1,1)$ \\ \hline
 $11$ & $-1$ & $-2/5$ &$1/\sqrt{2}$ &$-1/\sqrt{2}$ &  $(p_3,p_1,p_2,p_4)$ & $(1,1,1,1)$ \\ \hline
 $12$ & $-2/5$ & $0$ &$5/\sqrt{29}$ & $-2/\sqrt{29}$ &  $(p_1,p_3,p_2,p_4)$ & $(1,1,1,1)$ \\ \hline
\end{tabular}
\end{center}

\end{table}

Table \ref{table:2} contains all the information to compute $\langle X, X \rangle$. This decomposition contains a lot of polygons and the expression could be much simplified. This inefficiency is a consequence of using the brute force algorithm. In fact, we see that the integrand is 1 except for parts between $p_2$ and $p_4$ in polygon $P_3$ and between $p_2$ and $p_1$ in polygon $P_4$.

From Table \ref{table:2}, we can readily compute $\langle X, X \rangle$. The integrand is 1 from the minimum vertex until height $4$, except for the two polygons $P_3$ and $P_4$ where the squared Euler characteristic is $4$ after $p_1$ (in $P_3$) and $p_4$ (in $P_4$) up until $p_2$. In other words:
\begin{footnotesize}
\begin{align*}
\langle X,X \rangle & = \int_{-4}^{4} \int_{S^{d-1}} \sum_{k=1}^{12} \sum_{i=1}^{4} a_{i,k}^2 \big( 1_{ \ge \langle v,b_{k,i} \rangle }(h)1_{ \le \langle v,b_{k,i+1} \rangle }(h) \big) \ dv \ dh \\
& = \sum_{k=1}^{12} \int_{\theta_{k}}^{\theta_{k+1}} \big(4 - \min_i ( p_i \cdot v ) \big) \ dv + 4 \Big[ \int_{\theta_2}^{\theta_3} \big( p_2 \cdot v - p_4 \cdot v \big) \ dv + \int_{\theta_3}^{\theta_4} \big( p_2 \cdot v - p_1 \cdot v \big) \ dv \Big] \\
& = 4 \Big[ 2 \pi + I_{P_3}(p_2) + I_{P_4}(p_2) - I_{P_3}(p_4) - I_{P_4}(p_1) \Big] \\
& - \sum_{k=\{1,2,3,12\}} I_{P_k}(p_1) - \sum_{k=4}^{7} I_{P_k}(p_4) - \sum_{k=8}^{11} I_{P_k}(p_3) \\
& = 8 \pi + 4\big( \frac{1}{\sqrt{5}} + \frac{1}{\sqrt{17}} \big) + 4\big(-\frac{1}{\sqrt{17}} + \frac{1}{\sqrt{2}} \big) -8\big( \frac{4}{\sqrt{17}} - \frac{2}{\sqrt{5}} \big) \\
& +8 \big( \frac{1}{\sqrt{2}} - \frac{4}{\sqrt{17}} \big)+ 4\big( \frac{-1}{\sqrt{17}} + \frac{1}{\sqrt{2}} \big)  + \frac{9}{\sqrt{17}} + \frac{9}{\sqrt{29}} + \frac{2}{\sqrt{5}} + \frac{8}{\sqrt{17}} +\frac{8}{\sqrt{5}}+\frac{20}{\sqrt{29}} \\
& = 8\big(\pi + \sqrt{2}\big) + 3\big( 2\sqrt{5} - \sqrt{17}\big) + \sqrt{29} \\
& \approx 42.878.
\end{align*}
\end{footnotesize}

\end{example}

\section{3D Algorithm}
\label{sec:3d}

In this Section, we describe an algorithm for computing Expression (\ref{eq:norm}) for 3D-meshes $X$ and $Y$. The algorithm proceeds as follows:
Input: Mesh $X$ with vertices $\{ x_1, \ldots, x_n \}$.
\begin{enumerate}
\item For each vertex $x_i$, compute its star $s_i$.
\item For each star $s_i$, compute the triple intersection points; that is, for all $x_j \in s_i,x_k \in s_i, x_j \neq x_k \neq x_i \neq x_j$, solve $v \cdot x_i = v \cdot x_j = v\cdot x_k$, get a collection of antipodal points on $S^{d-1}$, one pair for each triplet. Label each such point with the triplet $(i,j,k)$ that generated it
\item Triangulate $S^{d-1}$ by placing the vertices with points computed from (2). Connect two vertices if they share two of their labels and the inner products of the points they define is non-negative.
\item For each triangle in the triangulation, evaluate gain in the local ECT in the star. If this is zero, discard the triangle, otherwise record the change.
\item Merge adjacent triangles, that is, triangles that share an edge and that have the same gain. This results in a spherical polygon.
\item Get: For each vertex $x_i$ a collection of polygons $P_k$ with their local gains $a_k$.
\end{enumerate}

This results in having the mesh $X$ in the proto-transform format, i.e.  it is represented as an indicator function of height functions over polygons, which this time, may be overlapping.
After the algorithm, the next step of this algorithm is to use this representation to compute the distances. This resembles by and large the case introduced in the previous section, with relevant details about spherical integration left into the Appendix \ref{secA1}. We will use the rest of section to give mathematical reasoning behind Algorithm 2 as well as discuss some improvement ideas.

The first difference to the brute force algorithm is that we are now computing the polygon intersections inside each link. When mesh resolution is increased, $n$, the number of vertices, increases, but the star of each vertex will have more or less the same number of vertices, for example for primate molars $k \approx 6$. Using the links we need to compute about $n \cdot \binom{k}{3}$ intersections, instead of the $\binom{n}{3}$ intersections that the brute force algorithm requires.

This is based on the following, well-known, observation that allows one to compute changes in Euler Characteristic solely based on local information:
\begin{prop}
Let $X$ be a piecewise linear mesh with vertices $\{x_1,\ldots, x_n\}$ and $v$ a direction such that $h_i:= \langle x_i,v \rangle  \neq \langle x_j,v \rangle $ for $i \neq j$ and $f_v(h)$ the Euler curve in direction $v$. Let also $\epsilon = \min_{i \neq j}|(h_i-h_j)|/2$.
The gain

$$
f_v(h_i+ \epsilon) - f_v(h_i- \epsilon)
$$

associated to vertex $x_i$ in the Euler Curve $f_v$ is completely determined by Star($x_i$) and $v$. 
\end{prop}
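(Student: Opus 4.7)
The plan is to identify precisely which simplices of $X$ are added to the subcomplex $X_{v,h}$ as $h$ crosses the critical height $h_i$, and observe that they all lie in $\mathrm{Star}(x_i)$. The change in Euler characteristic is then an alternating sum over this local collection, which depends only on the star and on $v$.

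First, I would pin down the set of ``new'' simplices. By Definition \ref{def:restriction}, a simplex $\sigma$ of $X$ belongs to $X_{v,h}$ if and only if $\max_{x_j \in \sigma} \langle x_j, v\rangle \le h$. Hence $\sigma$ is contained in $X_{v,h_i+\epsilon}$ but not in $X_{v,h_i-\epsilon}$ precisely when
\[
\max_{x_j \in \sigma} \langle x_j, v\rangle \in (h_i-\epsilon,\, h_i+\epsilon].
\]
By the choice $\epsilon = \min_{j\ne k}|h_j-h_k|/2$, the only critical height of any vertex lying in this interval is $h_i$ itself, and by the hypothesis of distinct heights this vertex is uniquely $x_i$. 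Therefore $\sigma$ is new if and only if $x_i \in \sigma$ and every other vertex $x_j \in \sigma$ satisfies $\langle x_j, v\rangle < h_i$. Call this collection of new simplices $N_i(v)$.

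Second, every $\sigma \in N_i(v)$ contains $x_i$, so $N_i(v) \subseteq \mathrm{Star}(x_i)$. Moreover, membership of an element $\sigma \in \mathrm{Star}(x_i)$ in $N_i(v)$ is decided purely by comparing the values $\langle x_j,v\rangle$ for the vertices $x_j$ appearing in $\sigma$, data which is already carried by the star together with the direction $v$. Thus $N_i(v)$ is intrinsically determined by $\mathrm{Star}(x_i)$ and $v$.

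Finally, since $X_{v,h_i+\epsilon}$ is obtained from $X_{v,h_i-\epsilon}$ by adjoining exactly the simplices of $N_i(v)$, additivity of the Euler characteristic over simplex counts gives
\[
f_v(h_i+\epsilon) - f_v(h_i-\epsilon) \;=\; \sum_{\sigma \in N_i(v)} (-1)^{\dim \sigma},
\]
which manifestly depends only on $\mathrm{Star}(x_i)$ and $v$. The only subtle point — and hence the ``hard part,'' such as it is — is verifying that the collection $N_i(v)$ truly captures all the change, i.e.\ that no simplex outside $\mathrm{Star}(x_i)$ crosses the threshold at $h_i$; this is where the strictness assumption $h_i \ne h_j$ for $i\ne j$ and the definition of $\epsilon$ do the essential work.
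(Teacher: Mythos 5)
Your proof is correct and follows essentially the same route as the paper's: the paper's (much terser) proof likewise observes that the Euler characteristic is the alternating sum of simplex counts and that every simplex added when $x_i$ enters the sublevel set lies in $\mathrm{Star}(x_i)$; you have simply spelled out the role of the distinct-heights hypothesis and the choice of $\epsilon$ in isolating $N_i(v)$.
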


\begin{proof}
The Euler characteristic of $X$ is the alternating sum of its simplex counts. Any new simplices that are added to $X$ when $x_i$ is added to it are by definition contained in the star of $x_i$.
\end{proof}

The second difference that the algorithm has is that now the spherical polygons may or may not be disjoint. This is not much of a problem, because regardless of which decomposition we use, we have the following:

\begin{prop}
Suppose shapes $K_1, K_2$ are compactly-supported on unit ball $B^{d}$, let $\mathrm{ECT}(K_1)=\sum_{i=1}^n\alpha_if_i$ and $\mathrm{ECT}(K_2)=\sum_{i=1}^m\beta_jg_j$, we denote $\alpha_{n+j}=-\beta_j$ and $f_{n+j}=g_j$ for simplicity, then we have
\begin{align*}
    d^2(K_1,K_2)&=\int_{S^2}\int_{I}(\mathrm{ECT}(K_1)-\mathrm{ECT}(K_2))^2dtd\sigma\\
    &=\int_{S^2}\int_{I}(\sum_{i=1}^n\alpha_if_i-\sum_{j=1}^m\beta_jg_j)^2 dtd\sigma\\
    &=\int_{S^2}\int_{I}(\sum_{i=1}^{m+n}\alpha_if_i)^2 dtd\sigma\\
    &=\sum_{i,j=1}^{m+n}\alpha_i\alpha_j\int_{S^2}\int_{I}f_if_j dtd\sigma\\
    &=\sum_{i,j=1}^{m+n}\alpha_i\alpha_j\int_{S^2}\int_{I}1_{\{P_i\cap P_j\}}\cdot 1_{[\max \{x(P_i)\cdot v,\, x(P_j) \cdot v\},1]}\ \,dtd\sigma\\
    &=\sum_{i,j=1}^{m+n}\alpha_i\alpha_j\int_{P_i\cap P_j}1-\max \{x(P_i)\cdot v,\, x(P_j) \cdot v\}\ \,dtd\sigma\\
    &=\sum_{i,j=1}^{m+n}\alpha_i\alpha_j\bigg(\mathrm{Vol}(P_i\cap P_j) \\
    & -\int_{P_i\cap P_j\cap \{x(P_i)\cdot v > x(P_j)\cdot v\}}x(P_i)\cdot v\ \,dtd\sigma \\
    & -\int_{P_i\cap P_j\cap \{x(P_i)\cdot v > x(P_j)\cdot v\}}x(P_j)\cdot v\ \,dtd\sigma\bigg).
\end{align*}
\noindent

\end{prop}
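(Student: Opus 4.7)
The plan is to verify the chain of equalities line by line, since the proposition is essentially an algebraic unpacking of the squared $L^2$ distance once each $f_i$ is interpreted as the indicator function arising from the proto-transform decomposition. Recall that in this format, each $f_i$ corresponds to a spherical polygon $P_i$ together with a critical vertex $x(P_i)$, and encodes the contribution of this vertex to the Euler curve, namely
\[
f_i(v,t) \;=\; 1_{P_i}(v)\cdot 1_{[x(P_i)\cdot v,\,1]}(t).
\]
With this identification in hand, every line of the claimed identity is an elementary rewriting, and nothing deeper than Fubini and the disjoint union $\{x(P_i)\cdot v> x(P_j)\cdot v\}\sqcup\{x(P_i)\cdot v< x(P_j)\cdot v\}$ (up to a measure-zero set) is required.

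First I would justify the reindexing trick: setting $\alpha_{n+j}=-\beta_j$ and $f_{n+j}=g_j$ turns the difference $\mathrm{ECT}(K_1)-\mathrm{ECT}(K_2)$ into a single sum $\sum_{i=1}^{m+n}\alpha_i f_i$, which is valid because both $\mathrm{ECT}(K_1)$ and $\mathrm{ECT}(K_2)$ are compactly supported in the unit ball and all sums are finite. Expanding the square and swapping sum and integral (legitimate because the sum is finite) yields the bilinear expression $\sum_{i,j}\alpha_i\alpha_j\int_{S^2}\int_I f_i f_j\,dt\,d\sigma$.

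Second I would evaluate the integral of $f_if_j$. Using the product of indicators,
\[
f_i(v,t)f_j(v,t)\;=\;1_{P_i\cap P_j}(v)\cdot 1_{[\max\{x(P_i)\cdot v,\,x(P_j)\cdot v\},\,1]}(t),
\]
which is exactly the expression in the sixth line of the statement. Performing the $t$-integral explicitly over $[\max\{\cdot\},1]$ produces $1-\max\{x(P_i)\cdot v,\,x(P_j)\cdot v\}$, giving the seventh line. Finally, to obtain the last expression, I split $P_i\cap P_j$ according to which of the two linear functionals $v\mapsto x(P_i)\cdot v$ and $v\mapsto x(P_j)\cdot v$ is larger. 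On the set $\{x(P_i)\cdot v>x(P_j)\cdot v\}$ the maximum equals $x(P_i)\cdot v$, and on the complementary set (up to the measure-zero locus where they coincide) it equals $x(P_j)\cdot v$. Collecting the constant term separately as $\mathrm{Vol}(P_i\cap P_j)$ gives the stated formula.

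The only subtlety worth a remark is the boundary set where $x(P_i)\cdot v=x(P_j)\cdot v$, but this is a great-circle arc of spherical measure zero and so contributes nothing to either integral; strictly speaking the second split integral in the statement should read $\{x(P_i)\cdot v<x(P_j)\cdot v\}$ rather than $>$, and this appears to be a typographical slip in the statement, but either convention yields the same value. Apart from this, the proof is routine bookkeeping: no convergence or measurability issues arise, since all sums are finite and all the indicators are Borel. The main conceptual content of the proposition is therefore not the manipulation itself but the observation that the proto-transform decomposition need not be disjoint — the bilinear form still expands cleanly because pairwise products of indicators are themselves indicators on intersections.
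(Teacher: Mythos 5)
Your argument follows the same route as the paper, which offers no separate proof beyond the displayed chain of equalities: interpret each $f_i$ as $1_{P_i}(v)\,1_{[x(P_i)\cdot v,\,1]}(t)$, expand the finite bilinear form, integrate out $t$, and split $P_i\cap P_j$ according to which height functional is larger. That part is sound, and you are right that the last line of the statement contains a slip (both integrals are written over $\{x(P_i)\cdot v> x(P_j)\cdot v\}$).

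However, your closing remark about the coincidence locus is not correct as stated. The set $\{v: x(P_i)\cdot v=x(P_j)\cdot v\}$ is a great circle of measure zero only when $x(P_i)\neq x(P_j)$. In the double sum the diagonal terms $i=j$ (and, more generally, any pair of polygons attached to the same critical vertex, which the 3D algorithm produces in abundance) satisfy $x(P_i)=x(P_j)$ identically, so the coincidence locus is all of $P_i\cap P_j$ and is not negligible. Hence ``either convention yields the same value'' fails: with strict inequalities on both pieces ($>$ and $<$) every such term loses its height contribution, e.g.\ for $i=j$ one would obtain $\mathrm{Vol}(P_i)$ instead of the correct $\mathrm{Vol}(P_i)-\int_{P_i}x(P_i)\cdot v\,d\sigma$. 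Your own split earlier in the proof --- $\{x(P_i)\cdot v>x(P_j)\cdot v\}$ together with its complement --- is the right one precisely because the complement is $\{x(P_i)\cdot v\le x(P_j)\cdot v\}$; accordingly, the corrected second integral should be taken over the non-strict set $\{x(P_i)\cdot v\le x(P_j)\cdot v\}$, not over $\{x(P_i)\cdot v< x(P_j)\cdot v\}$ as you propose.
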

To derive the closed formula of ECT distance, it suffices to derive the closed formula of integral of $x\cdot v$ on arbitrary spherical polygons, and a formula for the volume of the intersection of the polygons. In general, it is difficult to provide a closed-form formula for the former spherical integral in $d$ dimensions. The simple case of 2 dimensions was already addressed in the previous section (Proposition \ref{2dheight}), and the three-dimensional formula, which is the most important one for applications, is presented in Appendix \ref{secA1}. Note that in three dimensions the intersection area can be computed with, for example, the Gauss-Bonnet Theorem.

\section{Case Study}
\label{sec:case study}

Now that we have described our algorithm, we demonstrate its utility by applying it to a practical problem. We also compare it to the distances obtained from landmarks as well as a diffeomorphism based method \cite{boyer2011algorithms}. The comparison may be of independent interest to theoreticians pondering the differences between the continuous Procrustes distances and the one obtained from ECT, and to what extent these two are commensurate.

\subsection{The Data}

The dataset comprises 116 primate molars, representing diverse genera and dietary preferences. This dataset was first analyzed in \cite{boyer2011algorithms}. 

\subsubsection{Data Processing}

The meshes are aligned based on landmarks using the Auto3DGM\cite{boyer2015new, puente2013distances} algorithm, with the furthest point sampling and 128 and 256 landmarks for phases 1 and 2, respectively. 

For the purpose of the ECT methods, the meshes were converted to OFF files with meshlab, and further subsampled to 1,000 faces with its quadric edge collapse decimation algorithm. We further center and scale the meshes to the unit ball by a standard procedure.

\subsection{Methods}

We compute the Digital ECT distances based on the algorithm described in this manuscript, using the software provided in Section \ref{code}. The discretized ECT distances are based on 326 directions obtained from a regular subdivision of a regular octahedron. This is a standard procedure to produce close to uniformly spread points on the 2-sphere. For each direction, the discrete Euler Characteristic Transform is evaluated with 100 equally spaced heights between -1 and 1, resulting in a matrix of size $362 \times 100$.

The landmark distance is based on the 2nd phase of the Auto3DGM algorithm that was used to align the meshes. This algorithm makes use of minimum spanning tree to propagate distances, unlike our ECT distances. The continuous procrustes distance is a verbatim copy of the results in \cite{boyer2011algorithms}, reproduced from its accompanying code, provided here for reference.

\subsection{Results}

To see a high-level overview of the different distances and how they cluster the data, multidimensional Scaling Plots of the distances are presented in Figure \ref{MDS plots}. These are 2-D summaries of the distance matrix, projecting points to the plane such that the global distance is distorted as little as possible. From the plots we see that all the methods give, on high level, similar kind of clustering of the dietary groups.

\begin{figure*}
\centering
\begin{subfigure}[b]{0.475\textwidth}
    \centering
    \includegraphics[width=\textwidth]{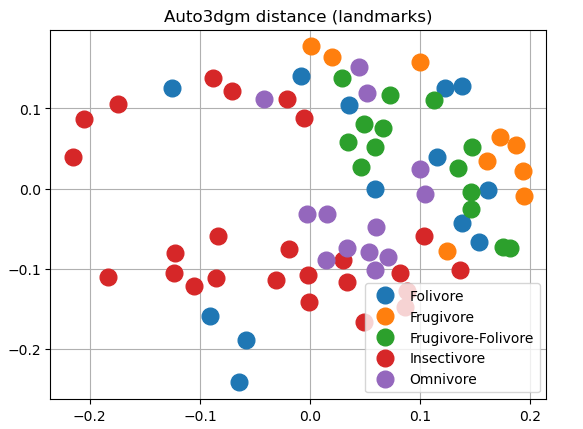}
    \caption[MDS]%
    {{\small Landmarks}}    
    \label{mdsauto3dgm}
\end{subfigure}
\hfill
\begin{subfigure}[b]{0.475\textwidth}  
    \centering 
    \includegraphics[width=\textwidth]{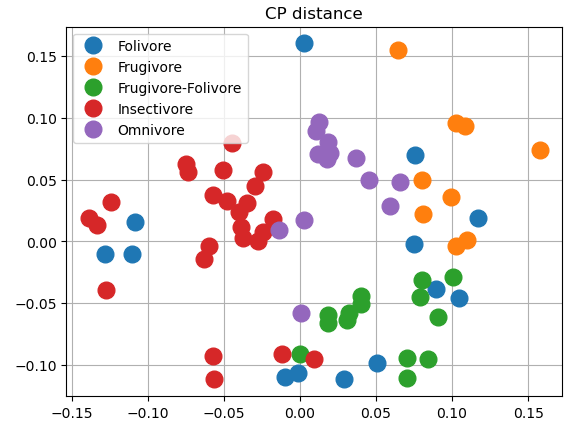}
    \caption[]%
    {{\small Continuous Procrustes Distance}}    
    \label{mdscp}
\end{subfigure}
\vskip\baselineskip
\begin{subfigure}[b]{0.475\textwidth}   
    \centering 
    \includegraphics[width=\textwidth]{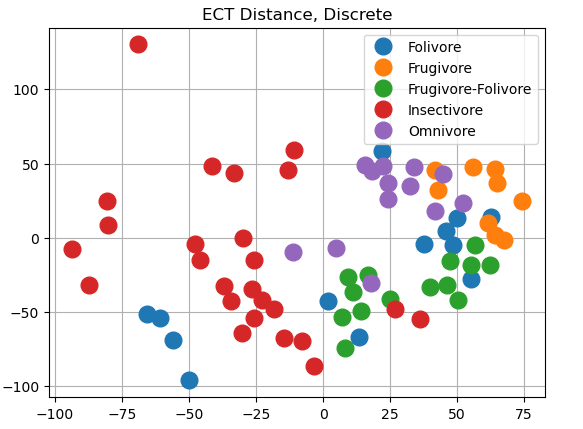}
    \caption[]%
    {{\small Discrete ECT}}    
    \label{mdsdiscrete}
\end{subfigure}
\hfill
\begin{subfigure}[b]{0.475\textwidth}   
    \centering 
    \includegraphics[width=\textwidth]{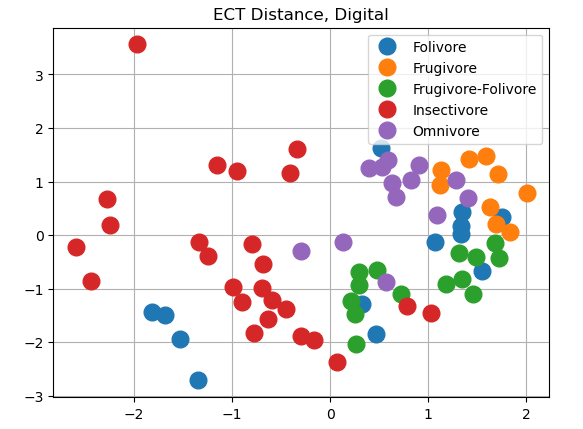}
    \caption[]%
    {{\small Digital ECT}}    
    \label{mdsdigital}
\end{subfigure}
\caption[ Multidimensional Scaling plots  ]
{\small Multidimensional Scaling plots of the different distances. The clustering is quite similar across the methods, with Omnivores and Frugivores forming their own clusters.} 
\label{MDS plots}
\end{figure*}

In Table \ref{table:3} The mantel correlation of the distance matrices are presented. These are standard measures of correlations between distance matrices, accounting for all the pairwise distances. We see that all the four methods are related, as the MDS plots in Figure \ref{MDS plots} hint. Unsurprisingly, we see that the Digital ECT is closely related to the Discrete one, indicating that the discretization that we chose is quite good at capturing the distances as a whole. Interestingly, the digital method has smaller correlation with the two non-ECT methods than the discrete one.

In Figure \ref{tsneplots} we also look at the t-sne plots of the collection computed with different notions of distance. The t-sne, unlike MDS, puts emphasis on small distances, which is a key element in, for example, manifold learning and Gaussian process classifiers, that make use of local structure of the collection in their inference. It is conceivable that there may be relevant small differences that yield to different inference even if the distances are globally in great agreement. While we see some differences between the two methods, the closest neighbors are in much agreement, for example, the frugivores are far from insectivores, and the folivores form three clear subclusters whose relative positions are similar in either display.

\begin{figure*}
\centering
\begin{subfigure}[b]{0.475\textwidth}
    \centering
    \includegraphics[width=\textwidth]{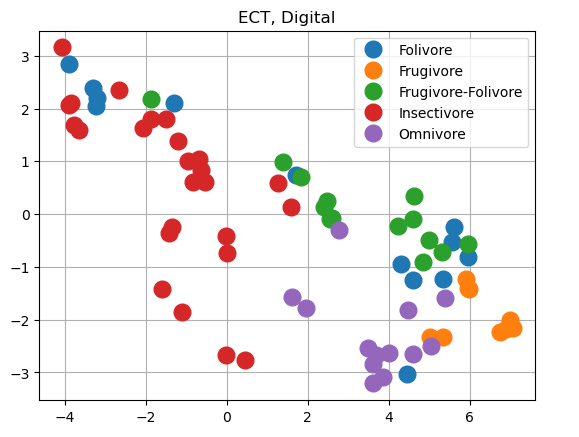}
    \caption[tsne]%
    {{\small Digital  ECT}}    
    \label{tsnedigi}
\end{subfigure}
\hfill
\begin{subfigure}[b]{0.475\textwidth}  
    \centering 
    \includegraphics[width=\textwidth]{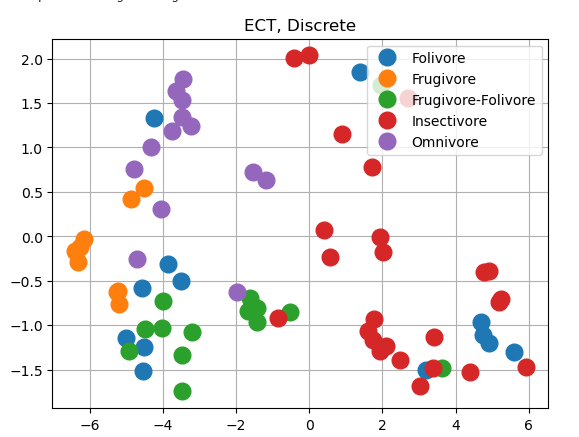}
    \caption[]%
    {{\small Discrete ECT}}    
    \label{tsnedisc}
\end{subfigure}
\caption[ $t$-sne plots  ]
{\small $t$-sne plots based on the distances from the two ECT methods. } 
\label{tsneplots}
\end{figure*}

\begin{table}[!ht]
\caption{Correlation of pairwise distances (the Mantel correlations) of the distance matrices produced by different methods.}
\label{table:3}
\begin{center}
\begin{tabular}{ |c|c|c| } 
 \hline
Method 1 & Method 2 & Correlation \\  \hline
Digital ECT & Discrete ECT & $0.9991$ \\
Digital ECT & Landmarks & $0.6066$\\
Digital ECT & Continuous Procrustes & $0.4896$\\
Discrete ECT & Landmarks & $0.6070$\\
Discrete ECT & Continuous Procrustes & $0.4927$\\
Landmarks & Continuous Procrustes & $0.7373$\\
 \hline
\end{tabular}
\end{center}
\end{table}

In spite of the small differences visible in $t$-sne plots, there is little difference in the clustering behavior, and indeed if we tried to predict, for example, frugivores against other classes using a local smoothing type method, we would get similar inference.

The conclusion of this case study is that the discretization yields results that are very accurate and the inference is for all intents and purposes just as good as with the digital transform. This corroborates the rule of thumb that 326 uniformly spread directions with hundred sublevel sets is typically a good enough discretization for a collection of surface-like 3D-shapes. In general, the procedure outlined in this section can be used to objectively assess how good a given discretization is. One such assessment is presented in Table \ref{table:3b}, where we report the Mantel correlations against the digital algorithm against various discretizations. Ultimately, the discretization must be chosen in terms of whatever problem is studied, what we present here is but a single indication of the discretization error on a high level.

\begin{table}[!ht]
\caption{The Mantel correlations of the distances from different discretization against the true values obtained from the digital algorithm. The discretizations are obtained by subsetting the full discretization of 100 sublevelsets and 326 directions uniformly.}
\label{table:3b}
\begin{center}
\begin{tabular}{ |c|c|c|c|c|c|c| } 
 \hline
Directions $\backslash$ Sublevel sets & 100 & 50 & 25 & 20 & 10 & 5\\ \hline
326 & 0.9991 & 0.9989 & 0.998 & 0.9973 & 0.9923 & 0.9740\\
110 & 0.9979 & 0.9975 & 0.9945 & 0.9923 & 0.9782 & 0.9460\\
38 & 0.9921 & 0.9912 & 0.9838 & 0.9786 & 0.9360 & 0.8481\\
14 & 0.9535 & 0.9487 & 0.9308 & 0.9142 & 0.8173 & 0.7077\\
6 & 0.8715 & 0.8610 & 0.8153 & 0.7921 & 0.6639 & 0.5413\\ \hline
\end{tabular}
\end{center}

\end{table}

\section{Shape Alignment with the Digital Transform}
\label{sec:alignment}

In general, given a collection of shapes $X_1,\ldots,X_n$, the shape alignment problem refers to finding a set of $\binom{n}{2}$ rigid transformations called correspondences $c_{i,j}$ that best align each shape $X_j$ to another shape $X_i$. This is quite a nuanced problem, because the composition of two correspondences $c_{i,j} \circ c_{k,i}$ may not necessarily yield the correspondence between $X_j$ and $X_k$, so some sort of harmonization procedure is in order, for example through a minimum spanning tree. In this section we study the more modest problem of aligning two shapes $X$ and $Y$, which is a first step to studying the more general alignment problem. Solving the more general problem efficiently is left for future.

Concretely, the problem that we want to solve is to find $A \in SO(3)$ that minimizes the distance

\begin{equation}
\label{mindistance}
d_{ECT}(X,AY) = \Big( \int_{-1}^{1} \int_{S^{d-1}} \big( \textrm{ECT}_X(v,h)-\textrm{ECT}_{AY}(v,h) \big)^2 \ dv dh \Big)^{1/2}.
\end{equation}

This is of course the same as maximizing the generalized cross-correlation

$$
\langle X,AY \rangle = \int_{-1}^{1} \int_{S^{d-1}} \textrm{ECT}_X(v,h)\textrm{ECT}_{AY}(v,h) \ dv dh.
$$

One of the immediate advantages that the digital transform has vis-à-vis the discrete one is that it is automatically $SO(d)$-equivariant, and it is almost everywhere differentiable.

If we are using the discrete transform, we have restricted the ECT to a finite set of directions $(v_1, \ldots v_n)$. In particular, we do not have access to the ECT at directions $(Av_1, \ldots Av_n)$, so we are required to approximate the action of $A$ by a permutation of the directions $(v_1, \ldots v_n)$. For example, in \cite{SINATRA} this problem was solved by discretizing $SO(3)$ to a grid of size 16,000, which were approximated by a permutations of the set of 2,918 directions. For each of the 16,000 actions $A_i$, we would then solve the linear assignment problem 

$$
\tilde p_i = \textrm{arg min}_{p:[n] \rightarrow [n]} \sum_{j=1}^{n} (pv_j - A_i v_j)^2,
$$

which is a computationally very expensive operation.

In this Section we propose two algorithms for aligning shapes with \textsc{Ectoplasm}: adaptive grid search and gradient descent. These two methods can be used in tandem. These methods have three advantages over the existing ECT-based alignment schemes. Firstly, it removes the computational overhead of approximating rigid motions with permutations. Secondly, the method involves no permutations, removing an additional source of noise. Thirdly, we are not restricted to any particular a priori chosen grid, so we can simply add more iterations as we go to gain further accuracy without having to redefine the grid and without solving the expensive permutation approximation problem from scratch.

We will set $Y$ be a mesh and $X=AY$ with a randomly chosen $A \in \textrm{SO}(3)$ so that we have a well-defined minimum and we can study the convergence of our algorithms.

\subsection{Adaptive Grid Search}

We begin by discretizing the group of rotations $ \textrm{SO}(3)$. The group of rotations can be parametrized by 3 angles, $(\alpha, \beta, \gamma)$, which can be seen as the quotient space of $\R^3$ with the following identifications:

\begin{align*}
(\alpha,\beta,\gamma) & \simeq (\alpha + 2 \pi,\beta,\gamma) \\
(\alpha,\beta,\gamma) & \simeq (\alpha ,\beta + 2 \pi,\gamma) \\
(\alpha,\beta,\gamma) & \simeq (\alpha +,\beta,\gamma+ 2 \pi) \\
(\alpha,\beta,\gamma) & \simeq (\alpha +\pi,\beta+\pi,\gamma+\pi).
\end{align*}

The angles $\alpha, \beta, \gamma$ are the axis-aligned rotations, and we can associate to each a rotation matrix for example as follows:

$$
A_{\alpha,\beta,\gamma} = \begin{pmatrix}
\cos \alpha & -\sin \alpha & 0\\
\sin \alpha & \cos \alpha & 0\\
0 & 0 & 1
\end{pmatrix} 
\begin{pmatrix}
\cos \beta & 0 & \sin \beta\\
0 & 1 & 0\\
-\sin \beta & 0 & \cos \beta
\end{pmatrix} 
\begin{pmatrix}
1 & 0 & 0\\
0 & \cos \gamma & -\sin \gamma\\
0 & \sin \gamma & \cos \gamma
\end{pmatrix} .
$$

Next we pick an initial grid to start our search. We choose a grid of $8 \times 8 \times 5= 320$ points. Concretely, the grid is the cCrtesian product of the following values for parameters $\alpha, \beta$ and $\gamma$:

\begin{align*}
\alpha & = 0, \pi/4, \pi/2, 3\pi/4, \ldots, 7\pi/4;\\
\beta & = 0, \pi/4, \pi/2, 3\pi/4, \ldots, 7\pi/4;\\
\gamma & =0, \pi/4, \pi/2, 3\pi/4, \pi. 
\end{align*}

After each iteration, we pick the set of angles that produced the maximal cross-correlation. For each element of this maximal angle, we take its two neighbors, and subdivide the resulting interval to 4 pieces. Our new grid is the Cartesian product of these intervals.

This way, each angle is on the same scale, and our grid is of size $4^3=64$, a multiple of 16, leading to efficient parallelization.

As a demonstration of this algorithm, we take as $Y$ one of the shapes analyzed in the previous section, and we take a random element $A$ from $\textrm{SO(3)}$ and set $X=AY$. Using the notation above, in terms of $(\alpha,\beta,\gamma)$, $A$ is represented as approximately $ (4.76,1.07,2.10)$.

We present results of this run in Table \ref{table:adaptivegridsearch}. For each iteration $i$, we report the angles, the ECT inner product, as well as the $\textrm{SO}(3)$ distance between $A$ and its approximation $A_i$.

Recall the $\textrm{SO}(3)$ distance can be computed for two rotation matrices $A$ and $A_i$ as

$$
d(A,A_i) = \arccos \frac{\textrm{trace}(AA^T_{i})-1}{2}.
$$

\begin{table}[!ht]
\caption{Results of the Adaptive Grid Search}
\label{table:adaptivegridsearch}
\begin{tabular}{ |c|c|c|c|c|c| } 
 \hline
 Iteration & $\alpha$ & $\beta$ & $\gamma$ & $\langle X, A_iY \rangle$ & $\textrm{SO}(3)$ Distance \\ \hline
 1 & 4.71239 & 0.78540 & 1.57080 & 22.11458 & 0.56745 \\
 2 & 4.97419 & 1.04720 & 2.35619 & 23.54182 & 0.12827 \\
 3 & 4.79966 & 1.22173 & 2.18166 & 23.20256 & 0.15730 \\
 4 & 4.68330 & 1.10538 & 2.06531 & 24.01233 & 0.05861 \\
 5 & 4.76087 & 1.02781 & 2.14288 & 23.90973 & 0.06097 \\
 6 & 4.81258 & 1.07952 & 2.19459 & 24.14080 & 0.05507 \\
 7 & 4.77811 & 1.04504 & 2.09116 & 24.22715 & 0.03779 \\
 8 & 4.75512 & 1.06803 & 2.11415 & 24.39189 & 0.01881 \\
 9 & 4.73980 & 1.08335 & 2.06818 & 24.37854 & 0.02024 \\
 10& 4.75002 & 1.07313 & 2.09882 & 24.49974 & 0.00863 \\
 11& 4.75683 & 1.06632 & 2.09201 & 24.51984 & 0.00783 \\
  \hline
 Truth & 4.75875 & 1.07199 & 2.09902 & 24.60740 & 0.00000 \\
  \hline
\end{tabular}
\end{table}


From Table \ref{table:adaptivegridsearch} we see that the inner product is inversely proportional to the $\textrm{SO}(3)$ Distance, so that the inner product is in practice a useful metric for the shape alignment problem, and the algorithm does not, for example, seek a local minimum. The method seems to be able to find the correct alignments using relatively fast.

\subsection{Gradient Descent}

The digital version of the shape alignment problem is, in theory, almost differentiable, and the piecewise components of it are easily described with even pen and paper. However, writing the gradient down exactly is quite a complicated task. To this end, we also implement an auto-differentiable version of the code, which allows us to compute the gradient by abusing the simplicity of individual operators in conjunction with the chain rule. In order to do this, we provide a proof-of-concept implementation of the algorithm with torch functions, a popular machine-learning framework that has built-in automatic differentials for basic operations. Due to the cost of the computational resources required for an efficient implementation, we demonstrate a proof-of-concept of this procedure by applying the algorithm on a simple non-convex polyhedron on 9 vertices and 14 faces obtained from simplifying a shape studied in the previous section. This is a mesh with high curvature.

The ECT representation introduced in this paper and implemented in the code package \textsc{Ectoplasm} allows for straightforward implementation of both vanilla gradient descent and also stochastic gradient descent.

In terms of this representation, write $ECT_X(v,h) = \sum_{i=1}^{n} \alpha_i P(x_i,v_i)$ and \\ $ECT_Y(v,h) = \sum_{j=1}^{n} \alpha_j P(y_j,v_j)$, the alignment problem corresponds to finding the optimal $A$ that maximizes

\begin{align*}
\langle X,AY \rangle &  = \int_{-1}^{1} \int_{S^{d-1}} \textrm{ECT}_X(v,h)\textrm{ECT}_{AY}(v,h) \ dv dh \\
& =  \int_{-1}^{1} \int_{S^{d-1}} \sum_{i=1}^{n} \alpha_i P(x_i,v_i) \sum_{j=1}^{m} \alpha_jP(Ay_j,Av_j) \ dv dh.
\end{align*}

The gradient of this expression with respect to the vector angles $(\alpha, \beta, \gamma)$ is then readily available through auto-differentiation, and we can seek the optimum via gradient ascent.

For the stochastic gradient, we have an additional choice to make. We could divide up the double sum over one index or the other, or even both of them. The latter is not necessarily a good option\footnote{At least if sampling uniformly at random.}, because then we would at each iteration compute the gradient based on the intersection of two sparse sets of spherical polygons on $S^2$. The gradient would then point out to whichever direction maximizes the overlap between these two sets. This type of sampling would lead to large volatility that would override the signal. By subsetting only one index, we ensure the other set of polygons is full, i.e., there is a polygon (possibly several) in each direction, so that the intersection cannot be maximized solely by seeking maximal overlap.

We seek the alignment by vanilla gradient ascent, which we initialize with angles $\theta_0:=(\alpha,\beta,\gamma)=(0,0,0)$. We update the angles $\theta_i$ of alignment $A_i$ with the gradient ascent

$$
\theta_{i+1} = \theta_{i} + \lambda \frac{\nabla \theta_i}{ ||\nabla \theta_i ||},
$$
with $\lambda=1$ for the first 30 iterations and $0.1$ after the 20 following iterations and 0.01 thereafter. The results are presented in Table \ref{table:gradientascent}.
\begin{table}[!ht]
\caption{Results of the Gradient Ascent}
\label{table:gradientascent}
\begin{tabular}{ |c|c|c|c|c|c| } 
 \hline
 Iteration & $\alpha$ & $\beta$ & $\gamma$ & $\langle X, A_iY \rangle$ & $\textrm{SO}(3)$ Distance \\ \hline
0 & 0 & 0 & 0 & 29.4329 & 2.90493\\ 
5 & 1.2458 & 0.3611 & 0.8870 & 29.8906 & 2.59675\\ 
10 & 0.6660 & 0.8395 & 0.4906 & 29.8233 & 2.84400\\ 
15 & 1.2837 & 1.0522 & 0.1899 & 29.7568 & 2.02619\\ 
20 & 2.6830 & 0.9540 & -0.9083 & 31.1978 & 0.37889\\ 
25 & 1.8636 & 1.0757 & -0.3600 & 30.4473 & 0.94800 \\ 
30 & 2.6630 & 0.9739 & -0.9524 & 31.1811 & 0.39757 \\ \hline
35 & 2.3444 & 1.0060 & -0.7960 & 32.0700 & 0.06131 \\
40 & 2.4250 & 1.0194 & -0.8537 & 32.0291 & 0.07331 \\
45 & 2.3589 & 0.9934 & -0.7833 & 32.0694 & 0.06037 \\
50 & 2.4251 & 1.0178 & -0.8535 & 32.0321 & 0.07292 \\ \hline
55 & 2.3914 & 1.0061 & -0.8184 & 32.2554 & 0.00587 \\
60 & 2.3894 & 1.0097 & -0.8083 & 32.2477 & 0.00717 \\
65 & 2.3911 & 0.9995 & -0.8162 & 32.2475 & 0.00726 \\
70 & 2.3861 & 1.0089 & -0.8149 & 32.2600 & 0.00398 \\
  \hline
 Truth & 2.3886 & 1.0059 & -0.8151 & 32.2767 & 0.00000 \\
  \hline
\end{tabular}
\end{table}

While the gradient ascent seems to find the correct alignment without very complicated optimization of $\lambda$, it does suffer from the fact that there are many local minima, which in part due to the high curvature of the mesh. From the oscillation of the loss in Table \ref{table:gradientascent} there are signs that $\lambda$ is set too high, but this is somewhat necessary so as to allow enough movement past local minima. The gradient ascent therefore seems much more useful in determining the fine alignment, after a cruder alignment from, for example, the adaptive grid search.

\section{Code availability}
\label{code}
The \textsc{Ectoplasm} code is available on: \\
\url{https://github.com/hkirvesl/ECTOPLASM}.

\section{Discussion and Future directions}

In this work we have introduced a digitalization of the Euler Characteristic Transform and developed a functioning algorithm for computing distances with it in a fully digital manner. This is just a first step to making full use of the rich mathematical structure in the transform. An interesting an important question is how one might use this transform for more complicated statistical problems, such as subshape analysis, in the spirit of \cite{SINATRA}.

One of the key enablers of the digitalization algorithm is the inclusion exclusion property of the Euler Characteristic. In theory, it would be quite straightforward to apply the same procedure for the Persistent Homology Transform. However, computing it in practice 
is a much harder problem due to the complexity of computations. But this would be very interesting due to the connections to the sheafification of shape space as outlined in \cite{arya2024sheaf}.

The differentiability of the alignment problem provides another connection to studying the shape space. In this work, we only considered transformations in the group $\textrm{SO}(3)$ for the specific problem of aligning shapes. But maybe we could find another more flexible group of transforms that might give us a way to find paths between shapes via gradients.  A simple minded idea would be to differentiate the penalty with respect to the vertex locations, but care needs to be taken so as not to allow for self-intersections. 

Another problem of applied interest would be extending the algorithm to soft tissue, as in \cite{kirveslahti2024representing}. This is again theoretically conceivable, but hard in practice.

\section*{Acknowledgements}

The authors would like to thank Nicolas Berkouk, Sayan Mukherjee, Justin Curry, Shreya Arya, Kathryn Hess, Robert Ravier, Wojciech Chach\'olski and Heather Harrington for helpful discussions and comments.

HK developed a 2-dimensional prototype of the algorithm during his visit to the Quantum Center at University of Southern Denmark in 2022. The author would like to thank the Center for their hospitality. The bulk of the work was carried out at EPFL.

\bibliographystyle{plain}
\bibliography{refs.bib}

\begin{thebibliography}{10}

\bibitem{arya2024sheaf}
Shreya Arya, Justin Curry, and Sayan Mukherjee.
\newblock A sheaf-theoretic construction of shape space.
\newblock {\em Foundations of Computational Mathematics}, pages 1--51, 2024.

\bibitem{bestvina1997morse}
Mladen Bestvina and Noel Brady.
\newblock Morse theory and finiteness properties of groups.
\newblock {\em Inventiones mathematicae}, 129:445--470, 1997.

\bibitem{boyer2011algorithms}
Doug~M Boyer, Yaron Lipman, Elizabeth St.~Clair, Jesus Puente, Biren~A Patel,
  Thomas Funkhouser, Jukka Jernvall, and Ingrid Daubechies.
\newblock Algorithms to automatically quantify the geometric similarity of
  anatomical surfaces.
\newblock {\em Proceedings of the National Academy of Sciences},
  108(45):18221--18226, 2011.

\bibitem{boyer2015new}
Doug~M Boyer, Jesus Puente, Justin~T Gladman, Chris Glynn, Sayan Mukherjee,
  Gabriel~S Yapuncich, and Ingrid Daubechies.
\newblock A new fully automated approach for aligning and comparing shapes.
\newblock {\em The Anatomical Record}, 298(1):249--276, 2015.

\bibitem{CMT}
Justin Curry, Sayan Mukherjee, and Katharine Turner.
\newblock How many directions determine a shape and other sufficiency results
  for two topological transforms, 05 2018.

\bibitem{247336}
Robert~Israel (https://mathoverflow.net/users/13650/robert israel).
\newblock Clairaut's relation and the equation of great circle in spherical
  coordinates.
\newblock MathOverflow.
\newblock URL:https://mathoverflow.net/q/247336 (version: 2016-08-12).

\bibitem{kells1940}
Lyman~M. Kells, Willis~F. Kern, and James~R. Bland.
\newblock {\em Plane And Spherical Trigonometry}.
\newblock McGraw Hill Book Company, Inc., 1940.
\newblock Retrieved July 13, 2018.

\bibitem{kirveslahti2024representing}
Henry Kirveslahti and Sayan Mukherjee.
\newblock Representing fields without correspondences: the lifted euler
  characteristic transform.
\newblock {\em Journal of Applied and Computational Topology}, 8(1):1--34,
  2024.

\bibitem{leygonie2022framework}
Jacob Leygonie, Steve Oudot, and Ulrike Tillmann.
\newblock A framework for differential calculus on persistence barcodes.
\newblock {\em Foundations of Computational Mathematics}, pages 1--63, 2022.

\bibitem{puente2013distances}
Jesus Puente.
\newblock Distances and algorithms to compare sets of shapes for automated
  biological morphometrics, 2013.

\bibitem{Schapira:tom}
Pierre Schapira.
\newblock Tomography of constructible functions.
\newblock In G{\'e}rard Cohen, Marc Giusti, and Teo Mora, editors, {\em Applied
  Algebra, Algebraic Algorithms and Error-Correcting Codes}, pages 427--435,
  Berlin, Heidelberg, 1995. Springer Berlin Heidelberg.

\bibitem{PHT}
Katharine Turner, Sayan Mukherjee, and Doug~M. Boyer.
\newblock Persistent homology transform for modeling shapes and surfaces.
\newblock {\em Information and Inference: A Journal of the IMA}, 3(4):310--344,
  2014.

\bibitem{tametopology}
Lou van~den Dries.
\newblock {\em Tame Topology and O-minimal Structures}.
\newblock London Mathematical Society Lecture Note Series. Cambridge University
  Press, 1998.

\bibitem{SINATRA}
Bruce Wang, Timothy Sudijono, Henry Kirveslahti, Tingran Gao, Douglas~M Boyer,
  Sayan Mukherjee, and Lorin Crawford.
\newblock A statistical pipeline for identifying physical features that
  differentiate classes of 3d shapes.
\newblock {\em The Annals of Applied Statistics}, 15(2):638--661, 2021.

\end{thebibliography}

\appendix

\section{Spherical integration formulae}
\label{secA1}

In this Appendix we detail how we integrate the height functions over spherical polygons in three dimensions.

We begin by fixing spherical coordinates for points on the unit sphere $S^2$, i.e. a homeomorphism $$F:(0,2\pi)\times(-\pi,\pi)\rightarrow S^2-I,\ (\phi, \tau)\mapsto (\cos{\tau}\cos{\phi},\ \cos{\tau}\sin{\phi},\ \sin{\tau}),$$ where $I$ is the union of Prime Meridian, North Pole and South Pole. In geometry, $\phi$ is the azimuthal angle and $\tau$ is the signed angle between the vector from the origin to that point and the $xy$-plane. We now provide a parametrization of great circles under this coordinate system.
\begin{lemma}[\cite{247336}]
\label{lem:parameterization}
    Under the chosen spherical coordinate, a non-polar great circle on a unit sphere can be characterized by \[\tan\tau=a \cos(\phi-\phi_0),\]in which we denote $a=\tan\tau_0$ and $(\phi_0,\tau_0)$ refers to the point with the minimum z-coordinate on the great circle.
\end{lemma}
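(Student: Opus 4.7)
The plan is to translate the great circle condition into the chosen spherical coordinate system and then recognize the standard amplitude--phase form of a linear combination of $\cos\phi$ and $\sin\phi$. A great circle on $S^2$ is the intersection of the sphere with a plane through the origin, so it is determined by a nonzero normal $\mathbf{n}=(n_1,n_2,n_3)$ via the equation $n_1x+n_2y+n_3z=0$. Substituting the parametrization $F(\phi,\tau)=(\cos\tau\cos\phi,\cos\tau\sin\phi,\sin\tau)$ yields
\begin{equation*}
n_1\cos\tau\cos\phi+n_2\cos\tau\sin\phi+n_3\sin\tau=0.
\end{equation*}
The assumption that the great circle is non-polar means it does not pass through the north and south poles, which is exactly the condition $n_3\ne 0$. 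Because the circle then avoids the poles, $\cos\tau\ne 0$ everywhere on it, so dividing through by $n_3\cos\tau$ produces the explicit graph
\begin{equation*}
\tan\tau \;=\; -\frac{n_1}{n_3}\cos\phi\;-\;\frac{n_2}{n_3}\sin\phi.
\end{equation*}

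Next I would invoke the standard amplitude--phase identity: there exist unique $R\ge 0$ and $\psi\in[0,2\pi)$ with $R\cos\psi=-n_1/n_3$ and $R\sin\psi=-n_2/n_3$, so that the right-hand side equals $R\cos(\phi-\psi)$. This already produces a relation of the form $\tan\tau=R\cos(\phi-\psi)$, which matches the formula in the lemma up to identification of the two constants $a$ and $\phi_0$.

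Finally I would pin down $a$ and $\phi_0$ from the geometric description in the statement. Along the great circle one has $z=\sin\tau$, and on a non-polar great circle $\tau$ is bounded strictly away from $\pm\pi/2$, so $z$ is minimized exactly when $\tan\tau$ is minimized, i.e.\ when $\cos(\phi-\psi)=-1$. This minimum occurs at $\phi_0=\psi+\pi\pmod{2\pi}$, and the corresponding $\tau_0$ satisfies $\tan\tau_0=-R$. Substituting $\psi=\phi_0-\pi$ back into the parametrization gives
\begin{equation*}
\tan\tau \;=\; R\cos(\phi-\psi) \;=\; -R\cos(\phi-\phi_0) \;=\; \tan\tau_0\,\cos(\phi-\phi_0) \;=\; a\cos(\phi-\phi_0),
\end{equation*}
which is the claimed characterization. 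The only real obstacle is careful sign bookkeeping to reconcile the amplitude--phase form, whose amplitude $R$ is the \emph{maximum} of the sinusoid, with the convention in the lemma that $(\phi_0,\tau_0)$ marks the \emph{minimum} of $z$ on the great circle; this sign flip is precisely what converts the factor $R$ into $a=\tan\tau_0=-R$.
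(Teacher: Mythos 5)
Your proof is correct, and it reaches the formula by a genuinely different (though closely related) route than the paper. The paper argues geometrically and in the forward direction: it first dispatches the equator, then rotates the circle so that its lowest point sits at azimuth $0$, writes down a unit normal of the resulting plane explicitly as $[\sin(-\tau_0),0,\cos(-\tau_0)]$ (using that the normal lies in the vertical plane through the minimum point and is orthogonal to that point), and expands a single dot product to get $\tan\tau=\tan\tau_0\cos(\phi-\phi_0)$. You instead start from an arbitrary plane normal $(n_1,n_2,n_3)$, observe that non-polar is exactly $n_3\neq 0$, obtain $\tan\tau=R\cos(\phi-\psi)$ from the harmonic-addition identity, and only then identify $\psi=\phi_0-\pi$ and $R=-\tan\tau_0$ by locating the minimum of $z=\sin\tau$ via monotonicity of $\sin$ and $\tan$ on $(-\pi/2,\pi/2)$. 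Your route has the advantage of actually deriving where the minimum sits relative to the normal (the paper takes that geometric fact as evident), at the cost of the sign bookkeeping you flag; the paper's rotation trick avoids the amplitude--phase step entirely. One minor edge case to state explicitly: when $R=0$ the circle is the equator, $\psi$ is not determined and ``the point of minimum $z$'' is not unique, so your identification of $\phi_0$ via $\cos(\phi-\psi)=-1$ presupposes $R>0$; the paper handles this degenerate case separately in its first sentence ($\tan\tau=0$, $a=\tan\tau_0=0$), and your argument should do the same.
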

\begin{proof}
    Firstly note that $\tan\tau=0$ characterizes the equator, where $a=tan\tau_0=0$. If the great circle, denoted by $G$, is not the equator, there exists only one point with the minimum z-coordinate on it. Applying a counterclockwise rotation by $\phi_0$ to $G$, denoted by $\rho_{\phi_0}G$, we notice that the unit vector ${\bf u}=[\sin(-\tau_0), 0,\cos(-\tau_0)]$ is normal to the plane that $\rho_{\phi_0}G$ lies on. Therefore \[{\bf u}\cdot [\cos{\tau}\cos{(\phi-\phi_0)},\  \cos{\tau}\sin{(\phi-\phi_0)},\ \sin{\tau}]=0,\]and hence we get $\tan\tau = a\cos(\phi-\phi_0)$.
\end{proof}
\noindent
With this parametrization for great circles, we can compute the following spherical integration we need for spherical polygons.
\begin{prop}
    Given a spherical polygon $SP_i=\{(\phi_1, \tau_1), (\phi_2,\tau_2),...,(\phi_n,\tau_n)\}$ and associated vertex $p_i=(\phi_i, \tau_i)$, the ordering of vertices of $SP_i$ is compatible with the orientation of the polygon, namely $[\frac{\partial}{\partial \phi},\frac{\partial}{\partial \tau}]$. If $(SP_i,p_i)$ satisfies 
    \begin{itemize}
        \item[1.] $p_i\notin I$;
        \item[2.] $SP_i$ does not intersect with $I$, equator and any meridian.
    \end{itemize}
    we denote $\phi_{n+1}=\phi_1$ and then have
    \[I(p_i)=\int_{SP_i} {\bf op_i}\cdot {\bf v}\ d\phi d\tau=\sum_{k=1}^n \frac{1}{4}\sin\tau_i\cdot I_{k1}-\frac{1}{4}\cos\tau_i\cdot I_{k2}-\frac{1}{2}\cos\tau_i\cdot I_{k3},\]
    where we set the origin to be $o$, ${\bf v}$ is the direction vector and
    \begin{align*}
  I_{k1} &= \int^{\phi_{k+1}}_{\phi_k}\frac{1-a^2\cos^2(\phi-\phi_0)}{1-a^2\cos^2(\phi-\phi_0)}\ d\phi,\\
  I_{k2} &= \int^{\phi_{k+1}}_{\phi_k}\frac{2a\cdot\cos(\phi-\phi_0)\cos(\phi-\phi_i)}{1+a^2\cos^2(\phi-\phi_0)}\ d\phi,
  \\
  I_{k3} &= \int^{\phi_{k+1}}_{\phi_k}\arctan(a\cdot\cos(\phi-\phi_0))\cos(\phi-\phi_i)\ d\phi.
\end{align*}
\end{prop}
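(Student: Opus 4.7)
The plan is to apply Green's theorem to reduce the area integral to a sum of single-variable integrals along the bounding great-circle arcs, substitute the edge parametrization from Lemma \ref{lem:parameterization}, and exploit a well-chosen antiderivative so that no square roots survive. The geometric hypotheses that $p_i\notin I$ and that $SP_i$ avoids $I$, the equator, and every meridian guarantee the coordinate chart is smooth on the polygon and that each edge is a non-polar great-circle arc globally parametrized by $\phi$.

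With $\mathbf{op_i}$ and $\mathbf{v}$ expanded in the spherical coordinates introduced above, the dot product is $\cos\tau_i\cos\tau\cos(\phi-\phi_i)+\sin\tau_i\sin\tau$. Reading the area element as the spherical measure $\cos\tau\,d\phi\,d\tau$ inherited from the distance formula and applying the double-angle identities $2\cos^2\tau = 1+\cos 2\tau$ and $2\sin\tau\cos\tau = \sin 2\tau$, the integrand rewrites as
\[
\tfrac{1}{2}\cos\tau_i\cos(\phi-\phi_i)\bigl(1+\cos 2\tau\bigr) + \tfrac{1}{2}\sin\tau_i\sin 2\tau,
\]
which admits the $\tau$-antiderivative
\[
F(\phi,\tau) = \tfrac{1}{2}\cos\tau_i\cos(\phi-\phi_i)\bigl(\tau+\tfrac{1}{2}\sin 2\tau\bigr) - \tfrac{1}{4}\sin\tau_i\cos 2\tau.
\]
By Green's theorem, $I(p_i) = -\oint_{\partial SP_i} F\,d\phi$, with the sign fixed by the counterclockwise orientation implied by the compatibility of the vertex ordering with $[\partial_\phi,\partial_\tau]$. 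On edge $k$, Lemma \ref{lem:parameterization} gives $\tan\tau = a\cos(\phi-\phi_0)$, from which
\[
\sin 2\tau = \frac{2a\cos(\phi-\phi_0)}{1+a^2\cos^2(\phi-\phi_0)},\qquad \cos 2\tau = \frac{1-a^2\cos^2(\phi-\phi_0)}{1+a^2\cos^2(\phi-\phi_0)},
\]
and $\tau = \arctan(a\cos(\phi-\phi_0))$. Plugging into $-F\,d\phi$ and integrating from $\phi_k$ to $\phi_{k+1}$ produces exactly $\tfrac{1}{4}\sin\tau_i\,I_{k1}-\tfrac{1}{4}\cos\tau_i\,I_{k2}-\tfrac{1}{2}\cos\tau_i\,I_{k3}$; summing over $k=1,\ldots,n$ with the convention $\phi_{n+1}=\phi_1$ closes the boundary and yields the claimed formula.

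The main obstacle is the antiderivative choice. The naive antiderivative $\cos\tau_i\cos(\phi-\phi_i)\sin\tau - \sin\tau_i\cos\tau$ of the bare integrand $\mathbf{op_i}\cdot\mathbf{v}$ leaves unavoidable $1/\sqrt{1+a^2\cos^2(\phi-\phi_0)}$ factors on the edges. Absorbing the spherical Jacobian $\cos\tau$ into the integrand before antidifferentiating --- equivalently, passing to functions of $2\tau$ --- is precisely the trick that forces each edge integral down to rational functions of $\cos(\phi-\phi_0)$ together with $\arctan(a\cos(\phi-\phi_0))$, matching the three integrals $I_{k1}, I_{k2}, I_{k3}$ on the nose. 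The remaining sign and orientation bookkeeping across the $n$ edges is tedious but follows automatically from the hypotheses on the vertex order.
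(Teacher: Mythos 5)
Your argument is essentially the paper's own proof: your antiderivative $F$ satisfies $-F\,d\phi=\omega$, the very 1-form the paper integrates over $\partial(SP_i)$ via Stokes' theorem, and your edge computation (substituting $\tan\tau=a\cos(\phi-\phi_0)$ from Lemma \ref{lem:parameterization} together with the double-angle identities to produce $I_{k1},I_{k2},I_{k3}$) is identical. Your explicit inclusion of the spherical Jacobian $\cos\tau$ correctly reflects what the paper's 1-form encodes, so the proposal is correct and takes the same route.
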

\begin{proof}
    According to \cite{kells1940}, the angle $\sigma$ between direction vector ${\bf v}=(\phi,\tau)$ and $\bf op_i$ satisfies \[\cos\sigma=\sin\tau_i\sin\tau+\cos\tau_i\cos\tau\cos(\phi-\phi_i).\]
    Notice that ${\bf op_i}\cdot{\bf v}=\cos\sigma$, we have
    \begin{align*}
        \int_{SP_i} {\bf op_i}\cdot {\bf v}\ d\phi d\tau=\int_{SP_i} \sin\tau_i\sin\tau+\cos\tau_i\cos\tau\cos(\phi-\phi_i)\ d\phi d\tau.
    \end{align*}
    Denote $\omega = \frac{1}{4}\sin\tau_i\cdot\cos 2 \tau -\frac{1}{4}\cos\tau_i\cdot\sin 2\tau\cdot\cos(\phi-\phi_i)-\frac{1}{2}\cos\tau_i\cdot\cos(\phi-\phi_i)$ the 1-form on $SP_i$, we observe that $d\omega$ is exactly the 2-form being integrated on the right-hand side. By Stokes Theorem, we have
    \[\int_{SP_i} {\bf op_i}\cdot {\bf v}\ d\phi d\tau=\int_{SP_i}d\omega=\int_{\partial (SP_i)}\omega=\sum_{k=1}^n\int_{e_k}\omega,\]where $e_k$ is the edge of $SP_i$ from $(\phi_k, \tau_k)$ to $(\phi_{k+1}, \tau_{k+1})$. We then use the tangent half-angle formula to replace $\cos 2\tau$, $\sin2\tau$ by $\tan\tau$ and plug the parametrization in lemma \ref{lem:parameterization} into $\omega$.
    \begin{footnotesize}
    \begin{align*}
        \int_{e_k} \omega &= \int_{e_k} \Big(\frac{1}{4}\sin{\tau_i}\cdot\frac{1-\tan^2\tau}{1+\tan^2\tau}-\frac{1}{4}\cos{\tau_i}\frac{2\tan\tau}{1+\tan^2\tau}\cos{(\phi-\phi_i)}-\frac{1}{2}\cos{\tau_i}\cdot\tau\cdot\cos{(\phi-\phi_i)}\Big)\ d\phi\\
        &= \frac{1}{4}\sin\tau_i\cdot I_{k1}-\frac{1}{4}\cos\tau_i\cdot I_{k2}-\frac{1}{2}\cos\tau_i\cdot I_{k3}.
    \end{align*}
    \end{footnotesize}
\end{proof}
\noindent
As we can see, there are restrictions on the choice of $(SP_i,p_i)$ to make the coordinate system and parametrization applicable. In practice, it is always possible and efficient to apply a rotation to $(SP_i, p_i)$ to satisfy the restrictions. Hence the formula above is enough for us to compute the desired integration on any kind of spherical polygon. 

\end{document}